\documentclass{amsart}
\usepackage{amsmath,amssymb,amsthm,amscd, xypic}
\usepackage{color}
\usepackage{enumerate}
\usepackage[T1]{fontenc} % Makes the rendering of "ť" correct.
\usepackage{hyperref}

\title{Migration of silting objects via adjoint pairs}

\author{Simion Breaz, Andrei M\u arcu\c s and George Ciprian Modoi}
\address[Simion Breaz, Andrei M\u arcu\c s and George Ciprian Modoi]{Babe\c s--Bolyai University, Faculty of Mathematics and Computer Science \\
1, Mihail Kog\u alniceanu, 400084 Cluj--Napoca, Romania}
\email[Simion Breaz]{bodo@math.ubbcluj.ro}
\email[Andrei M\u arcu\c s]{amarcus@math.ubbcluj.ro}
\email[George Ciprian Modoi]{cmodoi@math.ubbcluj.ro}
\thanks{This work was supported by a grant of the Ministry of Research, Innovation and Digitization, CNCS-UEFISCDI, project number PN-IV-P1-PCE-2023-0060, within PNCDI IV}

\renewcommand{\iff}{if and only if }

\newcommand{\la}{\longrightarrow}

\newcommand{\N}{\mathbb{N}}
\newcommand{\Z}{\mathbb{Z}}
\newcommand{\Q}{\mathbb{Q}}

\DeclareMathOperator{\Hom}{Hom}
\DeclareMathOperator{\RHom}{{\bf R}Hom}

\DeclareMathOperator{\id}{\bf 1}

\newcommand\lotimes{\otimes^{\mathbf L}}
\newcommand\yon{\mathbf y}

\newcommand{\CL}{\mathcal{L}}

\newcommand{\Y}{\mathcal{Y}}
\newcommand{\C}{\mathcal{C}}
\newcommand{\D}{\mathcal{D}}

\newcommand{\U}{\mathcal{U}}
\newcommand{\CP}{\mathcal{P}}
\newcommand{\X}{\mathcal{X}}
\newcommand{\V}{\mathcal{V}}
\newcommand{\W}{\mathcal{W}}
\newcommand{\CH}{\mathcal{H}}
\newcommand{\G}{\mathcal{G}}
\newcommand{\CS}{\mathcal{S}}
\newcommand{\BS}{\mathbb S}

\newcommand{\Ab}{\mathrm{Ab}}

\newcommand{\Mod}[1]{\hbox{\rm Mod-}{#1}}
\newcommand{\add}{\mathrm{add}}
\newcommand{\Add}{\mathrm{Add}}

\newcommand{\Prod}{\mathrm{Prod}}
\newcommand{\Susp}{\mathrm{Susp}}

\newcommand{\Der}[1]{\mathbf{D}({#1})}

\newcommand{\Loc}{\mathrm{Loc}}

\newcommand{\thick}{\mathrm{thick}\,}

\newcommand{\hocolim}{\mathrm{hocolim}}

\theoremstyle{plain}
\newtheorem{thm}{Theorem}[section]
\newtheorem{lemma}[thm]{Lemma}
\newtheorem{prop}[thm]{Proposition}
\newtheorem{cor}[thm]{Corollary}

\newtheorem{thm-intro}{Theorem}

\theoremstyle{definition}

\theoremstyle{remark}
\newtheorem{rem}[thm]{Remark}
\newtheorem{expl}[thm]{Example}

\newtheorem{setting}[thm]{Setting}

\begin{document}

%\maketitle

\begin{abstract}
Consider an adjoint triple of triangle functors between two nice enough triangulated categories. In this paper, we are looking for conditions under which the silting, respectively, cosilting property ascends or descends via at most left, respectively, at most right adjoint.
\end{abstract}

\maketitle

% ------------------------------------------------------------------------------
\section*{Introduction}

 Let $\varphi_*:\D_2\to\D_1$ be a triangle functor between two nice enough (for simplicity, let say, compactly generated) triangulated categories. Assume that it has a left adjoint $\varphi^*:\D_1\to\D_2$, or a right adjoint $\varphi^!:\D_1\to\D_2$, or sometimes both. Consider two objects $T_1,C_1\in\D_1$ and denote $T_2=\varphi^*T_1\in\D_2$ and $C_2=\varphi^!C_1$. The main question investigated in the present paper is to find validity conditions for the following implications: "$T_1$ is silting $\Rightarrow T_2$ is silting", "$T_2$ is silting $\Rightarrow T_1$ is silting", "$C_1$ is cosilting $\Rightarrow C_2$ is cosilting" or "$C_2$ is cosilting $\Rightarrow C_1$ is cosilting". For the first two, we use the terminology "The silting property ascends, respectively, descends via $\varphi^*$", and for the last two, we will say "The cosilting property ascends, respectively, descends via $\varphi^!$".

 The ascending and descending question for various properties is an old subject of investigation in commutative algebra. The aspect envisioned here is the possibility to transfer the property from the global situation to the local one, and conversely. For example, if $\varphi:A\to B$ is a morphism of commutative rings, then the property "being projective" relative to an $A$-module ascends via induction functor
 $\varphi^*=-\otimes_AB:\Mod A\to\Mod B$. If, moreover, $\varphi$ is faithfully flat, then the same property descends via $\varphi^*$ (see \cite{RG71}). Some notions as (co)silting are properly defined at the derived level. Therefore, we need to replace the tensor functor with the derived version $\varphi^*=-\lotimes_AB:\Der A\to\Der B$. This is the main subject of investigation in \cite{BHM25}. In the present paper, we generalize some results in \cite{BHM25} (see Example \ref{e:derived}, for the precise relationship between the main settings here and there). Instead of the derived category $\Der A$, over a ring $A$, we consider here an arbitrary compactly generated triangulated category $\D$. The commutativity of $A$ is translated here in the hypothesis that $\D$ is a closed tensor triangulated such that the tensor unit $\BS$ is itself compact and silting. Note that this hypothesis is general enough to include not only the derived category over a commutative ring, but also the homotopy category of spectra. We are able to translate, in the new more general setting, all the main results in \cite{BHM25} related to the ascending part. However, the descending part seems to be more complicated and interesting. Even if we found a way to interpret faithfully flatness in the new setting, for the descending of the silting property, we were forced to assume in Theorem \ref{t:silt-desc} and Corollary \ref{c:silt-desc} a stronger hypothesis (compared with the respective result in \cite{BHM25}), namely $T_1$ to be compact.
 Generalizing known results may be interesting by itself, but without telling something new about the original situation may be considered of limited value. Theorem \ref{t:cosilt-desc} shows that this is not the case here. The general approach allowed us to focus on the relevant properties, leading to a proof of the fact that the descending property holds under some mild hypotheses not considered before.

 The paper is organized as follows: In Section \ref{prel} the main notions are defined  and the main settings are established.  In Section \ref{general} some characterizations for the (co)silting property are recalled through some properties that remind us of the definition of (co)tilting objects. Necessarily and sufficient conditions for these properties to migrate via the functors $\varphi^*$ or $\varphi^!$ are also recorded.  The fact that the (co)silting property ascends in tensor triangulated categories is proved in Section \ref{ascending}, more precisely in Theorem \ref{thm:silting-ascends}. Moreover, if $T_1$ and $T_1'$ are equivalent silting objects in $\D_1$, then the induced silting objects $T_2$ and $T_2'$ are equivalent in $\D-2$, and an analogous statement holds for the cosilting case (see Corollary \ref{c:a-silt-are-eq}). In Section \ref{qff} we interpret the faithfully flatness in terms of adjunctions between compactly generated triangulated categories, and we call quasi faithfully flat such an adjunction.  With the hypothesis that the adjunction is quasi faithfully flat we prove in Corollary \ref{c:unique-asc} that if the silting of finite type objects $T_1$ and $T_1'$ in $\D_1$ induces equivalent silting (necessarily of finite type) objects in $\D_2$, then $T_1$ and $T_1'$ are also equivalent. Finally, in Section \ref{descending}, we formulate and prove our versions for the descending results for silting in Corollary \ref{c:silt-desc} and for cosilting in Theorem \ref{t:cosilt-desc}.

\section{Preliminaries and notations}\label{prel}

Let $\D$ be a triangulated category, whose shift functor is denoted by $(-)[1]$. For $n\in\Z$ the $n$-th iteration of the shift functor is denoted by $(-)[n]$. All our subcategories are full therefore, we identify a subcategory with the respective class of objects. If $\X\subseteq\D$ is such a class, we denote $\X[n]=\{X[n]\mid X\in\X\}$, for every $n\in\Z$. Moreover, for $N\subseteq\Z$ we set $\X[N]=\bigcup_{n\in\N}\X[n]$; if $\X=\{X\}$ is a singleton, we suppress unnecessary brackets, and we write simply $X[N]$ for this class. Often we write $[<0]$, $[\leq0]$ etc. instead of $[N]$ for the respective subset of $\Z$.

For $X,Y\in\D$, we say that $X$ is orthogonal to $Y$, provided that $\D(X,Y)=0$. This notion extends easily from objects to classes of objects as follows: If $\X,\Y\subseteq\D$ we write $\D(\X,\Y)$ for the class of all abelian groups $\D(X,Y)$ with $X\in\X$ and all $Y\in\Y$. So we say that $\X$ and $\Y$ are orthogonal if $\D(\X,\Y)=0$.   We define
\[\X^\perp=\{Y\in\D\mid\D(\X,Y)=0\}\hbox{ and }{^\perp\X}=\{Y\in\D\mid\D(Y,\X)=0\}.\]

Let $\X\subseteq\D$ be a subcategory. We call this subcategory: \begin{itemize}
 \item \textit{closed under extensions} if for every triangle
$X\to Y\to Z\to X[1]$ in $\D$ we have $Y\in\X$, whenever $X,Z\in\X$;
\item \textit{(co)smashing} if it is closed under coproducts (products) which exits in $\D$;
\item \textit{(co)suspended} if it is closed under extensions and positive (negative) shifts. If coproducts exist in $\D$ then we can construct $\Susp(\X)$ as the smallest suspended and smashing subcategory of $\D$ containing $\X$.
\end{itemize}

A {\em torsion pair} in $\D$ is a pair $(\U,\V)$ of subcategories $\D$ such that:
\begin{enumerate}
    \item Both $\U$ and $\V$ are closed under direct summands.
    \item $\D(\U,\V)=0$.
    \item Every object $X\in\D$ lies in a triangle $U\to X\to V\to U[1]$ with $U\in\U$ and $V\in\V$.
\end{enumerate}
It is clear that if $(\U,\V)$ is a torsion pair, then $\U={^\perp\V}$ and $\V=\U^\perp$. Consequently $\U$ is closed under coproducts and $\V$ is closed under products that exist in $\D$. Moreover, both of them are closed under extensions. We say that $\U$ is the \textit{aisle}, respectively $\V$ is the \textit{coaisle}\footnote{In the literature the names aisle and coaisle  are used often for t-structures, but we can use them safely in this more general context.} of the torsion pair $(\U,\V)$. Let $\G$ be a set of objects from $\D$. We say that the torsion pair $(\U,\V)$ is \textit{generated by $\G$} if $\V=\G^{\perp}$.

A torsion pair is called \textit{t-structure} (\textit{weight structure}, or briefly \textit{w-structure}) if in addition $\U$ is closed under positive (respectively negative) shifts, that is, if $\U$ is (co)suspended. Note that the closure of $\U$ under positive (negative) shifts, that is, $\U[1]\subseteq\U$ (respectively, $\U[-1]\subseteq\U$) is equivalent to $\V[-1]\subseteq\V$ (respectively, $\V[1]\subseteq\V$). For a t-structure $(\U,\V)$ in $\D$ the triangle
$U\to X\to V\to U[1]$ given in the definition of a torsion pair above is functorial in $X\in\D$ and gives rise to a right adjoint $X\mapsto U$ of the inclusion functor $\U\to\D$ and to a left adjoint $X\mapsto V$ of the inclusion functor $\V\to\D$ (see \cite{BBD82}).
We call this triangle the \textit{t-truncation triangle} associated to $X$. In the case of a w-structure, we still call a triangle as above a \textit{w-truncation triangle} associated to $X\in\D$, but this triangle is not more unique in general (see \cite{B10}). A t-structure (or a w-structure) $(\U,\V)$ is called {\em nondegenerate} if $\bigcap_{n\in\Z}\U[n]=0=\bigcap_{n\in\Z}\V[n]$.

The next result is a slight generalization of a well-known result. Actually, the result is classical provided that the torsion pairs considered below are t-structures, and the (straightforward) proof depends only on the fact that the aisle and the coaisle are left, respectively, right orthogonal to each other.

\begin{lemma}\label{lr-exact} Let $\D_1$ and $\D_2$ be two triangulated categories each of them being endowed a torsion pair $(\U_1,\V_1)$, respectively $(\U_2,\V_2)$. If $\varphi^*:\D_1\rightleftarrows\D_2:\varphi_*$ is an adjoint pair, then $\varphi^*(\U_1)\subseteq\U_2$ \iff $\varphi_*(\V_2)\subseteq\V_1$.
\end{lemma}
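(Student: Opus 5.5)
The argument uses only the adjunction isomorphism and the two orthogonality identities $\U_i={^\perp\V_i}$ and $\V_i=\U_i^\perp$, which hold for any torsion pair (as noted right after the definition). The shift-closure that would make these t-structures is never needed, which is why the lemma holds in this generality. We write the adjunction as the natural isomorphism
\[\D_2(\varphi^*X,Y)\cong\D_1(X,\varphi_*Y)\qquad\text{for } X\in\D_1,\ Y\in\D_2.\]

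For the direct implication, assume $\varphi^*(\U_1)\subseteq\U_2$ and fix $V\in\V_2$. To show $\varphi_*V\in\V_1=\U_1^\perp$, take any $U\in\U_1$. Then $\D_1(U,\varphi_*V)\cong\D_2(\varphi^*U,V)$, and this group is zero because $\varphi^*U\in\U_2$ and $\D(\U_2,\V_2)=0$. Hence $\varphi_*(\V_2)\subseteq\V_1$.

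For the converse, assume $\varphi_*(\V_2)\subseteq\V_1$ and fix $U\in\U_1$. To show $\varphi^*U\in\U_2={^\perp\V_2}$, take any $V\in\V_2$. Then $\D_2(\varphi^*U,V)\cong\D_1(U,\varphi_*V)=0$, since $\varphi_*V\in\V_1$ and $\D(\U_1,\V_1)=0$. Hence $\varphi^*(\U_1)\subseteq\U_2$.

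The only point needing care is that we may use the full equalities $\U_i={^\perp\V_i}$ and $\V_i=\U_i^\perp$, not merely the orthogonality $\D(\U_i,\V_i)=0$. This follows from the truncation triangle together with closure under direct summands. Indeed, if $X\in{^\perp\V}$, its triangle $U\to X\to V\to U[1]$ has zero map $X\to V$, so $X$ is a direct summand of $U$ and therefore lies in $\U$; the dual argument gives $\U^\perp=\V$. Once these equalities are in hand, both directions are immediate.
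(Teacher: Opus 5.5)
Your proposal is correct and matches the paper's (only sketched) argument: the paper likewise notes that the proof needs nothing beyond the adjunction isomorphism and the identities $\U_i={^\perp\V_i}$, $\V_i=\U_i^\perp$, with no shift-closure. Your splitting argument for these identities is a correct verification of what the paper calls ``clear'' right after the definition of a torsion pair.
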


In the hypotheses of the above lemma, we say that $\varphi^*$ is {\em right exact} and $\varphi_*$ is {\em left exact} with respect to the respective torsion pairs (sometimes these torsion pairs are tacitly understood), provided that $\varphi^*(\U_1)\subseteq\U_2$, respectively $\varphi_*(\V_2)\subseteq\V_1$.  If the torsion pairs are t-structures (or w-structures) and we use the more classical terminology left/right t-exact (or w-exact). A functor
which is both left and right t-exact (w-exact) will be called simply {\em t-exact} (w-exact).

A TTF triple is a triple $(\U,\V,\W)$ of subcategories of $\D$ such that both $(\U,\V)$ and $(\V,\W)$ are torsion pairs. This TTF triple is called (co)suspended, provided that $\V$ is closed under positive (negative) shifts, that is, provided that $(\U,\V)$ is a w-structure (t-structure) and $(\V,\W)$ is a t-structure (respectively, w-structure).

%We assume that $\D$ has coproducts. An object $T\in\D$ is called {\em silting} if it induces a t-structure $(T[<0]^\perp, T[\geq0]^\perp)$. Dually if $\D$ has products, an object $C\in\D$ is called cosilting if it induces a t-structure $({^\perp C[\leq0]}, )$. This t-structure is called the silting t-structure induced by $T$. Two silting objects are said to be equivalent if they induce the same t-structure. If we assume more, that the ambient triangulated category $\D$ is compactly generated, then silting t-structure extends to a suspended TTF triple
We assume that $\D$ has coproducts. An object $T\in\D$ is called {\em silting} if it induces a t-structure $(T[<0]^\perp, T[\geq0]^\perp)$. Dually, if $\D$ has products, an object $C\in\D$ is called cosilting if it induces a t-structure $({^\perp C[\leq0]},{^\perp C[>0]})$. This t-structure is called the (co)silting t-structure induced by $T$ (respectively $C$). Two (co)silting objects are said to be equivalent if they induce the same t-structure. If we assume that the ambient triangulated category $\D$ is compactly generated, then the (co)silting t-structure extends to a suspended TTF triple
\[\left({^\perp(T[<0]^\perp)}, T[<0]^\perp, T[\geq0]^\perp\right),\hbox{repectively }\left({^\perp C[\leq0]}, {^\perp C[>0]},({^\perp C[.0]})^\perp\right).\]
More precisely, according to \cite[Theorem 4.11]{An-19} there is a bijective correspondence between:
\begin{enumerate}[(1)]
    \item equivalence classes of silting objects;
    \item suspended TTF triples $(\U,\V,\W)$ for which the t-structure $(\V,\W)$ is nondegenerate and $\V=\CS^\perp$ for a  a set (and not a proper class) of objects $\CS\subseteq\D$.
\end{enumerate}

The equivalence between two silting objects can be tested in a simple way:

\begin{lemma}\label{equiv-silting} Let $\D$ be a triangulated category with coproducts and let $T$ and $T'$ be two silting objects in $\D$. Then the following are equivalent:
\begin{enumerate}[{\rm (i)}]
 \item $T$ and $T'$ are equivalent.
 \item $\Add(T)=\Add(T')$.
 \item $T'\in\Add(T)$.
 \item $T\in\Add(T')$.
\end{enumerate}
\end{lemma}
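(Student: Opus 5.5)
We will prove (i)$\Rightarrow$(ii)$\Rightarrow$(iii)$\Rightarrow$(i) and, symmetrically, (ii)$\Rightarrow$(iv)$\Rightarrow$(i). The implications (ii)$\Rightarrow$(iii) and (ii)$\Rightarrow$(iv) are trivial, since $T\in\Add(T)$ and $T'\in\Add(T')$. Also (iv)$\Rightarrow$(i) is (iii)$\Rightarrow$(i) with the roles of $T$ and $T'$ exchanged. So the work reduces to (i)$\Rightarrow$(ii) and (iii)$\Rightarrow$(i).

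For (i)$\Rightarrow$(ii) we use the standard description of $\Add(T)$ for a silting object through its t-structure $(\V,\W)=(T[<0]^\perp,T[\geq0]^\perp)$. The plan is to show that
\[\Add(T)=\V\cap{^\perp(\V[1])}.\]
Since this description depends only on $\V$, equivalent silting objects have the same $\Add$.

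\begin{enumerate}[(a)]
\item The inclusion $\subseteq$ is easy. First, $T\in\V$ because $\D(T,T[<0][\ast])$-type vanishing holds: silting forces $\D(T,T[>0])=0$. Indeed, $T[1]\in T[<0]$-shifts, and $T$ lies in the aisle $\V$ of its own t-structure. Here we must verify that the definition implies $T\in T[<0]^\perp$. This follows because $\V$ is an aisle containing $T[\geq 0]^{\perp}$'s left orthogonal. More concretely: write the truncation triangle $V\to T\to W\to$. Since $W\in T[\geq0]^\perp$, the map $T\to W$ is zero, so $T$ is a summand of $V$, hence $T\in\V$.
\item Next, $\D(T,\V[1])=0$, since $\V[1]\subseteq T[\leq 0]^\perp\subseteq T^\perp$. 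Both conditions are closed under coproducts, because $\V$, being a right orthogonal of a class of shifts of $T$, is closed under coproducts. They are also closed under summands.
\item For $\supseteq$, let $X\in\V\cap{^\perp(\V[1])}$. Take the canonical map $T^{(I)}\to X$ with $I=\D(T,X)$ and complete it to a triangle $T^{(I)}\to X\to Y\to T^{(I)}[1]$.
\item Then $Y\in T[<0]^\perp$, and $\D(T,Y)=0$ by surjectivity on $\D(T,-)$ together with $\D(T,T^{(I)}[1])=0$. Hence $Y\in T[\leq 0]^\perp=\V[1]$.
\item Consequently $X\to Y$ is zero, so $X$ is a summand of $T^{(I)}$, i.e.\ $X\in\Add(T)$.
\end{enumerate}

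The expected main obstacle is in step (d): showing $\D(T,T^{(I)}[1])=0$ and, more generally, that $\Add(T)\subseteq\V$. This needs the vanishing $\D(T,T^{(I)}[n])=0$ for $n>0$, which is not automatic for non-compact $T$. We obtain it exactly as in (a): $T^{(I)}\in\V$ since $\V$ is closed under coproducts, and $T^{(I)}[n]\in\V[n]\subseteq T[\leq 0]^\perp$ for $n\geq1$.

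For (iii)$\Rightarrow$(i), assume $T'\in\Add(T)$. Then $T[<0]^\perp\subseteq T'[<0]^\perp$, because $\D(T',-)$ vanishes wherever $\D(T,-)$ vanishes on all shifts in question, by passing to coproducts and summands. Likewise $T[\geq0]^\perp\subseteq T'[\geq0]^\perp$. Since both pairs are torsion pairs, from $\V\subseteq\V'$ and $\W\subseteq\W'$ we get $\V'={}^\perp\W'\subseteq{}^\perp\W=\V$. Hence $\V=\V'$, and so the t-structures coincide.
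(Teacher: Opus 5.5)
Your proof is correct, but one justification needs repair. In step (b) you say $\V$ is closed under coproducts ``because $\V$, being a right orthogonal of a class of shifts of $T$, is closed under coproducts''. That reasoning is wrong: a right orthogonal $\mathcal{X}^\perp$ is in general only closed under products, and closure under coproducts would need the objects of $\mathcal{X}$ to be compact. The conclusion is still true, for a different reason: $\V={}^\perp\W$ is the aisle of a torsion pair, hence a left orthogonal, hence closed under coproducts. You rely on this closure in (d) to get $T^{(I)}\in\V$ and $\D(T,T^{(I)}[n])=0$ for $n\geq1$, so you should state it that way.

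Similarly, the opening sentences of (a) are circular (``silting forces $\D(T,T[>0])=0$''). Delete them and keep only the truncation-triangle argument, which is correct on its own. Alternatively, note directly that $T\in{}^\perp(T[\geq0]^\perp)={}^\perp\W=\V$.

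Compared with the paper, your (iii)$\Rightarrow$(i) is the same orthogonality argument, only mirrored. You get $\V'\subseteq\V$ from $\W\subseteq\W'$ and combine it with $\V\subseteq\V'$. The paper gets $\W'\subseteq\W$ from $\V\subseteq\V'$ and combines it with $\W\subseteq\W'$.

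The genuine difference is (i)$\Rightarrow$(ii). The paper just invokes \cite[Theorem 2]{NSZ19}, where $\Add(T)$ is described through the silting t-structure. You instead prove that description, $\Add(T)=\V\cap{}^\perp(\V[1])$, directly via the approximation triangle $T^{(I)}\to X\to Y\to T^{(I)}[1]$ with $I=\D(T,X)$. Your route makes the lemma self-contained and shows that only the torsion-pair axioms of $(\V,\W)$ are used. You even recover $T\in T[<0]^\perp$ from the definition instead of quoting Proposition \ref{char-silt}. The paper's route is shorter but delegates this standard argument to the literature.
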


\begin{proof}
The implication (i)$\Rightarrow$(ii) follows from \cite[Theorem 2]{NSZ19}, where $\Add(T)$ is described in terms of the silting t-structure. The implications (ii) $ \Rightarrow $(iii) and (ii)$\Rightarrow$(iv)
are obvious, and we only have to show (iii)$\Rightarrow$(i), since for (iv)$\Rightarrow$(i) we only have to twist $T$ and $T'$.

(iii)$\Rightarrow$(i). Let $(\V,\W)$ and $(\V',\W')$ be the $t$-structures induced by $T$, respectively $T'$. Observe that $T[N]^\perp=\Add(T)[N]^\perp$, for any $N\subseteq\Z$, therefore \[\V=\Add(T)[<0]^\perp\hbox{ and } \W=\Add(T)[\geq0]^\perp,\] and similar for $\V'$, $\W'$ and $T'$.
Condition (iii) implies $\Add(T')\subseteq\Add(T)$, therefore we get:
\[\V=\Add(T)[<0]^\perp\subseteq\Add(T')[<0]^\perp=\V'\]
henceforth $\W'={^\perp\V'}\subseteq{^\perp\V}=\W$. On the other hand, we have:
\[\W=\Add(T)[\geq0]^\perp\subseteq\Add(T')[\geq0]^\perp=\W,\]
which means that $\W=\W'$ and we are done.
\end{proof}

Remark that a dual statement holds for the cosilting case, showing that two cosilting objects $C$ and $C'$ in $\D$ are equivalent if and only if $\Prod(C)=\Prod(C')$, where $\Prod(C)$ denotes the class of all direct summands of products of arbitrary copies of $C$.

\begin{setting}\label{s:tcats} Let $\D_1$ and $\D_2$ two triangulated categories.
\begin{enumerate}
\item[(L)] Suppose that $\D_1$ and $\D_2$ have coproducts, and $\varphi_*:\D_2\to\D_1$ is a triangle functor that has a left adjoint $\varphi^*:\D_1\to\D_2$. Fix an object $T_1\in\D_1$ and denote $T_2=\varphi^*T_1\in\D_2$.
\item[(R)] This is the dual of (1). More precisely, suppose that $\D_1$ and $\D_2$ have products and $\varphi_*:\D_2\to\D_1$ is a triangle functor that has a right adjoint $\varphi^!:\D_1\to\D_2$. Fix an object $C_1\in\D_1$ and denote $C_2=\varphi^!C_1\in\D_2$.
\item[(LR)] This is the conjunction between (1) and (2). More precisely, suppose that $\D_1$ and $\D_2$ have both products and coproducts, and that $\varphi^*\dashv\varphi_*\dashv\varphi^!$ is an adjoint triple of triangle functors. Fix two objects $T_1,C_1\in\D_1$ and denote $T_2=\varphi^*T_1\in\D_2$, $C_2=\varphi^!C_1\in\D_2$.
\end{enumerate}
\end{setting}

Suppose that we are in the hypotheses of Setting \ref{s:tcats} (L). We say that a property {\em ascends via $\varphi^*$}, if $T_2$  satisfies the respective property in $\D_2$, whenever $T_1\in\D_1$ does.  For the converse implication, we say that the property {\em descends via $\varphi^*$}. Examples of properties of interest for us are silting, silting of finite type etc. %Therefore, the silting property ascends (descends) via $\varphi^*$ means the implication "$T_1$ is silting $\Rightarrow T_2$ is silting" (respectively "$T_2$ is silting $\Rightarrow T_1$ is silting") holds.

Analogously, suppose that we are in the hypotheses of Setting \ref{s:tcats} (R). We say that a property, for example, being a cosilting object, {\em ascends via $\varphi^!$}, provided that the following implication holds: If $C_1\in\D_1$ satisfies the respective property, then $C_2\in\D_2$ does.
For the converse implication, we will say that the property descends via $\varphi^!$.

Remark that we can define ascending or descending for a property in a more general case; the categories $\D_1$ and $\D_2$ have not to be triangulated (often they are module categories), they need not to have (co)products etc. But in this paper, we are concerned only with the case mentioned above.

We will use the following result several times. Note that its proof follows straightforwardly from the adjunction:

\begin{lemma}\label{l:phi^-1} \begin{enumerate}[{\rm (1)}]
    \item Assume Setting \ref{s:tcats} (L). Then, for every $N\subseteq\Z$, we have  $$T_2[N]^\perp=\varphi_*^{-1}(T_1[N]^\perp)\hbox{  and consequently }\varphi_*(T_2[N]^\perp)\subseteq T_1[N]^\perp.$$
    \item Assume Setting \ref{s:tcats} (R). Then, for every $N\subseteq\Z$, we have $$^\perp C_2[N]=\varphi_*^{-1}(^\perp C_1[N])\hbox{ and consequently }\varphi_*(^\perp C_2[N])\subseteq ^\perp{C_1[N]}.$$
\end{enumerate}
\end{lemma}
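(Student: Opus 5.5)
The plan is to unwind the definitions of the orthogonal classes and apply the adjunction isomorphism one shift at a time. The second assertion in each part then follows formally from the first.

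For part (1), assume Setting \ref{s:tcats} (L), so $\varphi^*\dashv\varphi_*$. Fix $N\subseteq\Z$ and $Y\in\D_2$.

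First, I would note that $\varphi^*$ is a triangle functor, since it is the left adjoint of a triangle functor. Hence it commutes with shifts up to natural isomorphism, which gives $T_2[n]=(\varphi^*T_1)[n]\cong\varphi^*(T_1[n])$ for every $n\in N$.

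Second, the adjunction gives, for each $n\in N$, natural isomorphisms
\[\D_2(T_2[n],Y)\cong\D_2(\varphi^*(T_1[n]),Y)\cong\D_1(T_1[n],\varphi_*Y).\]
Consequently $Y\in T_2[N]^\perp$ if and only if $\D_1(T_1[n],\varphi_*Y)=0$ for all $n\in N$, that is, if and only if $\varphi_*Y\in T_1[N]^\perp$. This is exactly the equality $T_2[N]^\perp=\varphi_*^{-1}(T_1[N]^\perp)$.

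The inclusion $\varphi_*(T_2[N]^\perp)\subseteq T_1[N]^\perp$ is then immediate, because $\varphi_*(\varphi_*^{-1}(\mathcal A))\subseteq\mathcal A$ holds for any class $\mathcal A$.

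For part (2), assume Setting \ref{s:tcats} (R), so $\varphi_*\dashv\varphi^!$, and argue dually. Since $\varphi^!$ is a triangle functor, $C_2[n]\cong\varphi^!(C_1[n])$. For $X\in\D_2$ the adjunction gives
\[\D_2(X,C_2[n])\cong\D_1(\varphi_*X,C_1[n]).\]
Therefore $X\in{^\perp C_2[N]}$ if and only if $\varphi_*X\in{^\perp C_1[N]}$, and the consequence follows as before.

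The only point that needs care is the compatibility of the adjoints with shifts. An adjoint of a triangle functor is again a triangle functor, and the adjunction isomorphism is compatible with the shift structures. This is standard, and even the weaker statement that shifts commute up to some isomorphism is enough here, since only the vanishing of Hom groups is used. I do not expect any real obstacle.
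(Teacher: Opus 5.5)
Your proposal is correct and matches the paper's argument. The paper (whose proof is left as "straightforward from the adjunction") uses exactly the chain $Y\in T_2[N]^\perp\Leftrightarrow\D_1(T_1[N],\varphi_*Y)=0\Leftrightarrow Y\in\varphi_*^{-1}(T_1[N]^\perp)$, then $\varphi_*(\varphi_*^{-1}(\mathcal A))\subseteq\mathcal A$, and the dual for (2). Your handling of shifts is fine; you could even avoid using that $\varphi^*$ is a triangle functor, by writing $\D_2(T_2[n],Y)\cong\D_2(T_2,Y[-n])\cong\D_1(T_1,\varphi_*(Y[-n]))$ and using only that $\varphi_*$ commutes with shifts.
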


%\begin{proof} For $Y\in\D$ the chain of equivalences: \begin{align*}Y\in T_2[N]^\perp&\Leftrightarrow \D_2(T_2[N],Y)=0\Leftrightarrow\D_1(T_1[N],\varphi_*Y)=0\\ &\Leftrightarrow\varphi_*Y\in T_1[N]^\perp\Leftrightarrow Y\in\varphi_*^{-1}(T_1[N]^\perp) \end{align*} proves our claim. Finally $\varphi_*(T_2[N]^\perp)=\varphi_*(\varphi_*^{-1}(T_1[N]^\perp))\subseteq T_1[N]^\perp$ \end{proof}

%\begin{lemma} Let $\varphi_*:\D_2\rightleftarrows\D_1:\varphi^!$ be a pair of adjoint triangle functors between two triangulated categories. Let $C_1\in\D_1$ and $N\subseteq\Z$. If $C_2=\varphi^!C_1\in\D_2$, then $^\perp{C_2[N]}=\varphi_*(^\perp C_1[N])$. \end{lemma}

Since a triangle functor $\varphi_*:\D_2\to\D_1$ commutes with shifts, we do not need additional brackets in the expression $\varphi_*X[n]$, with $X\in\D_2$, and similar for its adjoints. Note also that $^\perp{C[N]}$ should be read as $^\perp(C[N])$.

\section{Formal characterizations for ascending property}\label{general}

In the sequel, we will employ the following characterization of silting objects:

\begin{prop}\label{char-silt}\cite[Corollary 2.7]{Br24}
 Let $\D$ be a triangulated category with coproducts. An object $T\in\D$ is silting if and only if
    \begin{enumerate}[{\rm S}1.]
        \item $T$ is semi-rigid, that is, $T\in T[<0]^\perp$.
        \item $T[<0]^\perp$ is closed under coproducts.
        \item $T$ generates $\D$, that is, $T[\Z]^\perp=\{0\}$.
    \end{enumerate}
\end{prop}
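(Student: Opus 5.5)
We prove the two implications separately. The forward one uses the definition of a silting object. The converse uses the description of the silting t-structure together with a truncation argument.

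\emph{Silting implies S1--S3.} Assume $T$ is silting, and write $(\V,\W)=(T[<0]^\perp,T[\geq0]^\perp)$ for the induced t-structure.
\begin{enumerate}[(a)]
\item S2: the aisle $\V$ of a torsion pair equals ${^\perp\W}$, and a left orthogonal class is closed under all coproducts that exist.
\item S1: we must show $\D(T,T[n])=0$ for $n>0$, that is, $T[-n]\in\W^{\perp}$-type vanishing against $T$; equivalently $T\in\V$. Since $T[-1]\in\V[-1]\subseteq\V$ fails in general, we argue directly. We have $\W={^\perp}$-free description $T[\geq0]^\perp$, and $T[<0]\subseteq{^\perp\V}$ by definition of $\V$. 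Now take the truncation triangle $V\to T\to W\to V[1]$ with $V\in\V$, $W\in\W$. The map $T\to W$ vanishes, because $W\in T[\geq0]^\perp\subseteq T^\perp$. Hence $T$ is a direct summand of $V$. Since $\V$ is closed under direct summands, $T\in\V=T[<0]^\perp$.
\item S3: if $X\in T[\Z]^\perp$, then $X\in\V\cap\W$, so $\D(X,X)=0$ and therefore $X=0$.
\end{enumerate}

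\emph{S1--S3 imply silting.} Assume S1--S3 and set $\V=T[<0]^\perp$, $\W=T[\geq0]^\perp$. The orthogonality $\D(\V,\W)=0$ is the real content of this direction, and it is the main obstacle. The plan is as follows.
\begin{enumerate}[(a)]
\item Let $\U=\Susp(T)$. Then $\U\subseteq\V$: indeed $T\in\V$ by S1, and $\V$ is closed under positive shifts (as $T[<0]^\perp$), under extensions, and under coproducts by S2.
\item Also $\U^\perp=T[\geq0]^\perp=\W$, since the orthogonal of $\Susp(T)$ is determined by the shifts $T[\geq0]$.
\item Use the standard construction: build, via homotopy colimits of iterated cones of maps from coproducts of copies of $T[\geq0]$, a triangle $U\to X\to W\to U[1]$ with $U\in\U$ and $W\in\U^\perp=\W$. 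This is the Neeman/Alonso--Jerem\'\i as--Souto argument, which uses only coproducts and the smallness provided by S2.
\item For $X\in\V$, the map $U\to X$ is an isomorphism: otherwise its cone $W$ would be a nonzero object of $\V\cap\W$. But $\V$ is cosuspended in the appropriate sense and contains $W$, since $U[1]$ and $X$ lie in $\V$. Then $\V\cap\W$ is contained in $T[\Z]^\perp=0$ by S3.
\item Consequently $\V=\U$, so $\D(\V,\W)=0$, and the truncation triangles are the ones built in (c). Both classes are clearly closed under summands, and $\V[1]\subseteq\V$. This makes $(\V,\W)$ a t-structure.
\end{enumerate}

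The delicate step is (c), the existence of the triangle. S2 has to replace compactness of $T$ in the homotopy colimit argument. If that fails, I would instead cite the existence of t-structures generated by a set in categories with coproducts, with S2 used to identify the aisle with $\V$.
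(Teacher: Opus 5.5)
\paragraph{Verdict.} The paper gives no proof of this proposition; it quotes [Br24, Corollary 2.7], so your attempt has to be judged on its own. The forward direction is correct, but the converse has a genuine gap at step (c), and that step carries the whole converse.

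\paragraph{What is correct.} For S1 in the forward direction there is a quicker route: $\D(T,\W)=0$ by the definition of $\W$, so $T\in{^\perp\W}=\V$. In the converse, steps (a), (b), (d) and (e) are sound. Suppose every $X$ admits a triangle $U\to X\to W\to U[1]$ with $U\in\Susp(T)$ and $W\in T[\geq0]^\perp$. Applying this to $X\in\V$ puts $W$ in $\V\cap\W=T[\Z]^\perp=0$. Hence $\V=\Susp(T)$ and $\D(\V,\W)=0$.

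\paragraph{The gap.} Your justification for (c) is wrong. The Alonso--Jerem\'{\i}as--Souto construction kills, at each stage, all maps from all $T[n]$ with $n\ge0$. It then deduces $\D(T[n],\hocolim_k X_k)=0$ from $\D(T[n],\hocolim_k X_k)\cong\varinjlim_k\D(T[n],X_k)$, and that isomorphism is exactly compactness of $T$. Condition S2 is a closure property of $T[<0]^\perp$, not a smallness property of $T$. Without compactness, $\D(T[n],\coprod_k X_k)$ differs from $\bigoplus_k\D(T[n],X_k)$, and a coproduct of maps killed by $\D(T[n],-)$ need not be killed by it. Your fallback does not help either. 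I know of no general theorem that an object generates a t-structure in an arbitrary triangulated category with coproducts; the known results need compact generators, or a well generated category as in Neeman's work.

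\paragraph{How to repair (c).} What S2 does give is that every $\V[k]=T[<k]^\perp$ is closed under coproducts and extensions, hence under Milnor colimits. The construction should exploit this by killing one shift at a time.
\begin{enumerate}
\item Put $X^{(0)}=X$. With $J_n=\D(T[n],X^{(n)})$, complete the canonical map to a triangle $T[n]^{(J_n)}\to X^{(n)}\to X^{(n+1)}\to T[n+1]^{(J_n)}$.
\item Since $T^{(J)}\in\V$ by S1 and S2, we have $\D(T[m],T[n+1]^{(J_n)})=0$ for $m\le n$. Together with the choice of $J_n$, induction gives $\D(T[m],X^{(n)})=0$ for $0\le m<n$.
\item For $n\ge N$, the cone of $X^{(N)}\to X^{(n)}$ is an iterated extension of the objects $T[k+1]^{(J_k)}$ with $k\ge N$. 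So it lies in $\V[N+1]=T[\le N]^\perp$.
\item A $3\times3$ argument on the Milnor triangles shows that the cone of $X^{(N)}\to W:=\hocolim_n X^{(n)}$ also lies in $\V[N+1]$. Hence $\D(T[m],W)\cong\D(T[m],X^{(N)})=0$ for $0\le m<N$. Since $N$ is arbitrary, $W\in\W$.
\item Run the same argument with $N=0$, using the coproduct- and extension-closed class $\Susp(T)[1]$ in place of $\V[1]$. This shows that the cocone $U$ of $X\to W$ lies in $\Susp(T)$.
\end{enumerate}
With this in place of (c), your steps (d) and (e) complete the proof.
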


Remark that the equality $\Loc(T)=\D$ implies condition S3 above; indeed, if $X\in T[\Z]^\perp$, then $X\in\Loc(T)^\perp$. But for the converse implication we need some additional assumptions.

 %, and we denote by
%$$\omega_{A,B}:\C(A,\varphi_*B)\to \D(\varphi^*A,B)$$
%the natural isomorphism induced by this adjoinction.
%It is well--known, and also straightforward to show, that if the left adjoint $\varphi^*$ is right exact with respect to a \iff the right adjoint $\varphi^*$ is left exact. \textcolor{red}{Moreover if this is the case they induce adjoint functors
%\[\X\cup\Y\stackrel{\subseteq}\longrightarrow\C\stackrel{\varphi^*}\longrightarrow\D\stackrel{H^0}\longrightarrow\X\cap\Y\hbox{\rm\ \ and\ \ }\X\cap\Y\stackrel{\subseteq}\longrightarrow\D\stackrel{\varphi_*}\longrightarrow\C\stackrel{H^0}\longrightarrow\X\cap\Y\]
%between the respective hearts. }

\begin{lemma}\label{cond-S}% \label{induced-silt}
 Assume Setting \ref{s:tcats} (L). Then the following statements hold true:
 \begin{enumerate}[{\rm (a)}]
     \item  $T_2$ satisfies S1 \iff $\varphi_*T_2\in T_1[<0]^\perp$.
     \item If $T_1$ satisfies S2 and $\varphi_*$ preserves coproducts, then $T_2$ satisfies S2.
     \item If $T_1$ satisfies S3 and $\varphi_*$ reflects zero objects, then $T_2$ satisfies S3.
     \item If $T_2$ satisfies S3 then $\varphi_*$ reflects zero objects.
 \end{enumerate}
\end{lemma}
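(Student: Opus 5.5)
All four statements follow from the adjunction $\varphi^*\dashv\varphi_*$, mainly in the form recorded in Lemma \ref{l:phi^-1}(1): for every $N\subseteq\Z$ we have $T_2[N]^\perp=\varphi_*^{-1}(T_1[N]^\perp)$. I will treat each item separately and translate each condition S1--S3 for $T_2$ into a condition on $\varphi_*$ applied to objects of $\D_2$.

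For (a), S1 for $T_2$ says $T_2\in T_2[<0]^\perp$. By Lemma \ref{l:phi^-1}(1) with $N=\{n<0\}$, this holds \iff $\varphi_*T_2\in T_1[<0]^\perp$.

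For (b), take a family $(Y_i)_{i\in I}$ in $T_2[<0]^\perp$. Lemma \ref{l:phi^-1}(1) gives $\varphi_*Y_i\in T_1[<0]^\perp$ for every $i$. By S2 for $T_1$, $\coprod_i\varphi_*Y_i\in T_1[<0]^\perp$. Since $\varphi_*$ preserves coproducts, $\coprod_i\varphi_*Y_i\cong\varphi_*\bigl(\coprod_iY_i\bigr)$, so $\varphi_*\bigl(\coprod_iY_i\bigr)\in T_1[<0]^\perp$. Applying Lemma \ref{l:phi^-1}(1) once more yields $\coprod_iY_i\in T_2[<0]^\perp$.

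For (c), let $Y\in T_2[\Z]^\perp$. Then $\varphi_*Y\in T_1[\Z]^\perp=\{0\}$, so $\varphi_*Y=0$. Because $\varphi_*$ reflects zero objects, $Y=0$.

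For (d), suppose $\varphi_*Y=0$. Then $\varphi_*Y\in T_1[\Z]^\perp$ trivially, so Lemma \ref{l:phi^-1}(1) gives $Y\in T_2[\Z]^\perp$. By S3 for $T_2$, $Y=0$, hence $\varphi_*$ reflects zero objects.

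There is no real obstacle here. The only point needing care is in (b), where the coproduct-preservation hypothesis is used to identify $\varphi_*\bigl(\coprod_iY_i\bigr)$ with $\coprod_i\varphi_*Y_i$.
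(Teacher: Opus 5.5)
Your proposal is correct and follows essentially the same argument as the paper: in each item the condition on $T_2$ is translated, via the adjunction isomorphism $\D_1(T_1[N],\varphi_*Y)\cong\D_2(T_2[N],Y)$, into a condition on $\varphi_*Y$. Citing Lemma \ref{l:phi^-1}(1) for this step instead of writing out the isomorphism each time, as the paper does, is only a cosmetic repackaging.
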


\begin{proof} The statement (a) follows by the adjunction isomorphism
\[\D_1(T_1[<0],\varphi_*T_2)\cong\D_2(\varphi^*T_1[<0],T_2)=\D_2(T_2[<0],T_2).\]

(b). Let $(X_i)_{i\in I}$ be a family of objects in $T_2[<0]^\perp$. Then we have
\[\D_1(T_1[<0],\varphi_*X_i)\cong\D_2(\varphi^*T_1[<0],X_i)=\D_2(T_2[<0],X_i)=0,\] hence $\varphi_*X_i\in T_1[<0]^\perp$, for all $i\in I$. Using S2 for $T_1$ and the hypothesis that $\varphi_*$ preserves coproducts, we deduce $\varphi_*(\coprod X_i)\cong\coprod\varphi_*X_i\in T_1[<0]^\perp$. Thus
\[\D_2\left(T_2[<0],\coprod X_i\right)=\D_2\left(\varphi^*T_1[<0],\coprod X_i\right)\cong\D_1\left(T_1[<0],\varphi_*(\coprod X_i)\right)=0,\] which shows S2 for $T_2$.

(c). Let $X\in T_2[\Z]^\perp$. Using again the adjunction isomorphism  \[\D_1(T_1[\Z],\varphi_*X)\cong\D_2(\varphi^*T_1[\Z],X)=\D_2(T_2[\Z],X)=0\]
we deduce $\varphi_*X\in T_1[\Z]^\perp=\{0\}$. Since $\varphi_*$ is supposed to reflect zero objects, it follows that $X=0$.

(d). Suppose that $\varphi_*X=0$. Then  \[\D_2(T_2[\Z],X)=\D_2(\varphi^*T_1[\Z],X)\cong\D_1(T_1[\Z],\varphi_*X)=0,\] hence S3 for $T_2$ implies $X=0$.
\end{proof}

We summarize the above results in the following Proposition. It contains an equivalent condition to the ascending property for silting objects, condition which follows formally from the adjunction property: 

\begin{prop}\label{induced-silt}
In Setting \ref{s:tcats} (L), suppose that $\varphi_*$ preserves coproducts and reflects zero objects, and that $T_1\in\D_1$ be a silting object. The following are equivalent:
\begin{enumerate}[\rm (i)]
    \item $T_2$ is silting in $\D_2$;
    \item $\varphi_*T_2\in T_1[<0]^\perp$.
\end{enumerate}
 Moreover, if the above equivalent conditions hold, then $\varphi_*$ is t-exact and $\varphi^*$ is right t-exact w.r.t. the silting t-structures induced by $T_1$ and $T_2$. %(i.e. $\varphi_*(\V_2)\subseteq\V_1$, $\varphi_*(\W_2)\subseteq\W_1$ and $\varphi^*(\V_1)\subseteq\V_2$, where  $(\V_1,\W_1)$  and $(\V_2,\W_2)$ are the respective silting t-structures).
 %Furthermore, if the categories $\D_1$ and $\D_2$ are compactly generated and $(\U_1,\V_1,\W_1)$ and $(\U_2,\V_2,\W_2)$ are the TTF triples induced by $T_1$ and $T_2$, then $\varphi^*$ is exact w.r.t. the silting w-structures induced by $T_1$ and $T_2$. %(i.e. we have in addition $\varphi^*(\U_1)\subseteq\U_2$).
\end{prop}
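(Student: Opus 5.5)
The plan is to combine Proposition \ref{char-silt} with Lemma \ref{cond-S}. For (i)$\Rightarrow$(ii): if $T_2$ is silting, then it satisfies S1. Lemma \ref{cond-S}(a) then gives $\varphi_*T_2\in T_1[<0]^\perp$ directly. For (ii)$\Rightarrow$(i), we check S1--S3 for $T_2$ one at a time.
\begin{itemize}
\item S1 follows from (ii) via Lemma \ref{cond-S}(a).
\item S2 holds because $T_1$ is silting, hence satisfies S2, and $\varphi_*$ preserves coproducts; so Lemma \ref{cond-S}(b) applies.
\item S3 holds because $T_1$ satisfies S3 and $\varphi_*$ reflects zero objects; so Lemma \ref{cond-S}(c) applies.
\end{itemize}
Proposition \ref{char-silt} then shows that $T_2$ is silting.

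For the exactness statement, write $(\V_i,\W_i)=(T_i[<0]^\perp,T_i[\geq0]^\perp)$ for the silting t-structures, $i=1,2$. By Lemma \ref{l:phi^-1}(1) with $N=[<0]$ and $N=[\geq0]$ we have $\varphi_*(\V_2)\subseteq\V_1$ and $\varphi_*(\W_2)\subseteq\W_1$. Thus $\varphi_*$ preserves both the aisle and the coaisle, so it is t-exact. Right t-exactness of $\varphi^*$ means $\varphi^*(\V_1)\subseteq\V_2$. By Lemma \ref{lr-exact} applied to the adjoint pair $\varphi^*\dashv\varphi_*$ and these torsion pairs, this is equivalent to $\varphi_*(\W_2)\subseteq\W_1$, which we have just shown.

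There is no serious obstacle here, since every step is formal. The only point needing care is matching conventions. Here the aisle is $\V=T[<0]^\perp$ and the coaisle is $\W=T[\geq0]^\perp$, so "right exact for $\varphi^*$" refers to aisles, and Lemma \ref{lr-exact} pairs it with the coaisle condition on $\varphi_*$.
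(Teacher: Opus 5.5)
Your proposal is correct and follows essentially the same route as the paper. The equivalence comes from Proposition~\ref{char-silt} together with Lemma~\ref{cond-S}, and the t-exactness of $\varphi_*$ from Lemma~\ref{l:phi^-1}; the right t-exactness of $\varphi^*$ then follows from Lemma~\ref{lr-exact}, where you correctly matched $\varphi^*(\V_1)\subseteq\V_2$ with $\varphi_*(\W_2)\subseteq\W_1$.
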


\begin{proof}
The equivalence follows from the characterization of a silting object given in Proposition \ref{char-silt} and the transfer of the properties S1-S3 from $T_1$ to $T_2=\varphi^*T_1$ established in Lemma \ref{cond-S}.

Further, the inclusions $\varphi_*(T_2[<0]^\perp)\subseteq T_1[<0]^\perp$
and $\varphi_*(T_2[\geq0]^\perp)\subseteq T_1[\geq0]^\perp$ follow from Lemma \ref{l:phi^-1}. Finally, the right t-exactness of $\varphi^*$ follows using Lemma \ref{lr-exact} and the above established t-exactness of $\varphi_*$.
\end{proof}

Note that everything in this section can be dualized in order to get results about cosilting objects. For the convenience of the reader, we formulate the dual results that we will apply in the sequel.

\begin{prop}\label{char-cosilt}\cite[Corollary 2.9]{Br24}
 Let $\D$ be a triangulated category with products. An object $C\in\D$ is cosilting if and only if
    \begin{enumerate}[{\rm C}1.]
        \item $C$ is semi-corigid, that is, $C\in{^\perp C[>0]}$.
        \item ${^\perp C[>0]}$ is closed under products.
        \item $C$ cogenerates $\D$, that is, ${^\perp C[\Z]}=\{0\}$.
    \end{enumerate}
\end{prop}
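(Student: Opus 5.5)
This characterization is the formal dual of Proposition \ref{char-silt}, and I would prove it by dualizing the argument of \cite[Corollary 2.7]{Br24}: pass to the opposite category $\D^{op}$, which is triangulated with products of $\D$ becoming coproducts and with shift $[1]$ in $\D$ corresponding to $[-1]$ in $\D^{op}$. For $X,Y\in\D$ we have $\D^{op}(Y,X)=\D(X,Y)$, so left orthogonals in $\D$ become right orthogonals in $\D^{op}$. Concretely, ${^\perp C[>0]}$ computed in $\D$ equals $C[<0]^\perp$ computed in $\D^{op}$, because $C[n]$ for $n>0$ in $\D$ is $C[-n]$ in $\D^{op}$. In the same way ${^\perp C[\leq0]}$ in $\D$ equals $C[\geq0]^\perp$ in $\D^{op}$.

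The first step is to check that the definitions match. $C$ is cosilting in $\D$ exactly when $({^\perp C[\leq0]},{^\perp C[>0]})$ is a t-structure in $\D$. A torsion pair $(\U,\V)$ in $\D$ corresponds to the torsion pair $(\V,\U)$ in $\D^{op}$: the triangle $U\to X\to V\to U[1]$ becomes $V\to X\to U\to V[1]$ in $\D^{op}$. Closure of $\U$ under $[1]$ in $\D$ translates to closure of the new aisle $\V$ under positive shifts of $\D^{op}$, using the equivalence noted before Lemma \ref{lr-exact} that $\U[1]\subseteq\U$ if and only if $\V[-1]\subseteq\V$. So $C$ is cosilting in $\D$ if and only if $(C[<0]^\perp, C[\geq0]^\perp)$, computed in $\D^{op}$, is a t-structure there, i.e.\ if and only if $C$ is silting in $\D^{op}$. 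This bookkeeping of which class is the aisle and the shift-direction conventions is the only point needing care, and it is where I expect the main (mild) obstacle.

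The second step is to apply Proposition \ref{char-silt} in $\D^{op}$, which has coproducts because $\D$ has products. Conditions S1--S3 for $C$ in $\D^{op}$ read as follows.
\begin{enumerate}[(1)]
\item S1 becomes $C\in C[<0]^\perp$ in $\D^{op}$, i.e.\ $C\in{^\perp C[>0]}$ in $\D$. This is C1.
\item S2 becomes closure of ${^\perp C[>0]}$ under products in $\D$. This is C2.
\item S3 becomes $C[\Z]^\perp=0$ in $\D^{op}$, i.e.\ ${^\perp C[\Z]}=\{0\}$ in $\D$. This is C3.
\end{enumerate}
Hence $C$ is cosilting if and only if C1--C3 hold, which is exactly the statement.
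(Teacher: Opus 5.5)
Your proposal is correct and takes the same approach as the paper. The paper gives no separate proof: it cites \cite[Corollary 2.9]{Br24} and remarks that everything in Section~\ref{general} dualizes. Your passage to $\D^{\mathrm{op}}$ is exactly that dualization of Proposition~\ref{char-silt}: there $({^\perp C[\leq0]},{^\perp C[>0]})$ becomes the pair $(C[<0]^\perp, C[\geq0]^\perp)$ with aisle and coaisle swapped, and products become coproducts. Your bookkeeping of the shift conventions in this translation is correct.
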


\begin{lemma}\label{cond-C}% \label{induced-silt}
 Assume Setting \ref{s:tcats} (R). Then the following statements hold true:
 \begin{enumerate}[{\rm (a)}]
     \item  $C_2$ satisfies C1 \iff $\varphi_*C_2\in{^\perp C_1[>0]}$.
     \item If $C_1$ satisfies C2 and $\varphi_*$ preserves products, then $C_2$ satisfies C2.
     \item If $C_1$ satisfies C3 and $\varphi_*$ reflects zero objects, then $C_2$ satisfies C3.
     \item If $C_2$ satisfies C3 then $\varphi_*$ reflects zero objects.
 \end{enumerate}
\end{lemma}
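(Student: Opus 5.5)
The plan is to dualize the proof of Lemma \ref{cond-S}. The only tool needed is the adjunction isomorphism $\D_1(\varphi_*X,C_1[n])\cong\D_2(X,\varphi^!C_1[n])=\D_2(X,C_2[n])$, valid for all $X\in\D_2$ and $n\in\Z$. Equivalently, we may use Lemma \ref{l:phi^-1}(2): for every $N\subseteq\Z$,
\[{^\perp C_2[N]}=\varphi_*^{-1}\bigl({^\perp C_1[N]}\bigr).\]
All four statements will be read off from this identity, with $N$ chosen as $[>0]$ or $\Z$.

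For (a), take $X=C_2$ and $N=[>0]$. Then $\D_2(C_2,C_2[>0])\cong\D_1(\varphi_*C_2,C_1[>0])$, so $C_2\in{^\perp C_2[>0]}$ holds exactly when $\varphi_*C_2\in{^\perp C_1[>0]}$.

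For (b), let $(X_i)_{i\in I}$ be a family in ${^\perp C_2[>0]}$. By Lemma \ref{l:phi^-1}(2), each $\varphi_*X_i$ lies in ${^\perp C_1[>0]}$. Since $C_1$ satisfies C2, the product $\prod\varphi_*X_i$ also lies there. Because $\varphi_*$ preserves products, $\prod\varphi_*X_i\cong\varphi_*(\prod X_i)$. Applying Lemma \ref{l:phi^-1}(2) once more gives $\prod X_i\in{^\perp C_2[>0]}$.

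For (c), let $X\in{^\perp C_2[\Z]}$. By adjunction, $\varphi_*X\in{^\perp C_1[\Z]}$, and this class is $\{0\}$ by C3 for $C_1$. Hence $\varphi_*X=0$, and since $\varphi_*$ reflects zero objects, $X=0$.

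For (d), suppose $\varphi_*X=0$. Then $\D_2(X,C_2[\Z])\cong\D_1(\varphi_*X,C_1[\Z])=0$, so $X\in{^\perp C_2[\Z]}=\{0\}$ by C3 for $C_2$.

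There is no genuine obstacle here; the argument is a formal dualization of Lemma \ref{cond-S}. The one point that needs care is the direction of the adjunction. Since $\varphi_*$ is now the left adjoint of $\varphi^!$, the relevant Hom-sets have $\varphi_*X$ in the first variable. Consequently, what is needed in (b) is preservation of products by $\varphi_*$, not coproducts. This holds automatically whenever $\varphi_*$ has a left adjoint, as in Setting (LR), but in general it must be assumed as in the statement.
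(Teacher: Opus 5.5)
Your proposal is correct and follows the paper's approach: the paper gives no separate proof of Lemma \ref{cond-C} and presents it as the formal dual of Lemma \ref{cond-S}, and your argument via the adjunction $\D_1(\varphi_*X,C_1[n])\cong\D_2(X,C_2[n])$ is exactly that dualization. Your remark that product preservation by $\varphi_*$ is automatic in Setting (LR), because $\varphi_*$ is then a right adjoint, is also correct.
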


\begin{prop}\label{induced-cosilt}
In Setting \ref{s:tcats} (R), suppose that $\varphi_*$ preserves products and reflects zero objects, and that $C_1\in\D_1$ be a cosilting object. The following are equivalent:
\begin{enumerate}[\rm (i)]
    \item $C_2$ is cosilting in $\D_2$;
    \item $\varphi_*C_2\in{^\perp C_1[>0]}$.
\end{enumerate}
Moreover, if the above equivalent conditions hold, then $\varphi_*$ is t-exact and $\varphi^!$ is left t-exact w.r.t. the cosilting t-structures induced by $C_1$ and $C_2$.
\end{prop}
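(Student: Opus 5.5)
The argument is the exact dual of the proof of Proposition \ref{induced-silt}. Everything will be read off from the characterization of cosilting objects in Proposition \ref{char-cosilt}, combined with the transfer of conditions C1--C3 along $\varphi^!$ recorded in Lemma \ref{cond-C}. The one point to keep in mind is that Lemma \ref{cond-C} is stated without proof as the dual of Lemma \ref{cond-S}. So I will briefly justify each item by the adjunction isomorphism $\D_2(Y,\varphi^!C_1[n])\cong\D_1(\varphi_*Y,C_1[n])$, which is the only input needed.

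For (i)$\Rightarrow$(ii), assume $C_2$ is cosilting. Then $C_2$ satisfies C1, that is, $\D_2(C_2,C_2[>0])=0$. By the adjunction this group is $\D_1(\varphi_*C_2,C_1[>0])$, so $\varphi_*C_2\in{^\perp C_1[>0]}$; this is Lemma \ref{cond-C}(a).

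For (ii)$\Rightarrow$(i), the same isomorphism read backwards gives C1 for $C_2$. For C2, take $X_i\in{^\perp C_2[>0]}$. By Lemma \ref{l:phi^-1}(2), $\varphi_*X_i\in{^\perp C_1[>0]}$. Since C2 holds for $C_1$ and $\varphi_*$ preserves products, $\varphi_*(\prod X_i)\cong\prod\varphi_*X_i\in{^\perp C_1[>0]}$. Applying Lemma \ref{l:phi^-1}(2) again gives $\prod X_i\in{^\perp C_2[>0]}$; this is Lemma \ref{cond-C}(b). For C3, take $X\in{^\perp C_2[\Z]}$. Then $\varphi_*X\in{^\perp C_1[\Z]}=\{0\}$, and since $\varphi_*$ reflects zero objects, $X=0$; this is Lemma \ref{cond-C}(c). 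Proposition \ref{char-cosilt} then shows that $C_2$ is cosilting.

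For the final statement, let $(\V_i,\W_i)=({^\perp C_i[\leq0]},{^\perp C_i[>0]})$ be the cosilting t-structures. Lemma \ref{l:phi^-1}(2) with $N=[\leq0]$ and $N=[>0]$ gives $\varphi_*(\V_2)\subseteq\V_1$ and $\varphi_*(\W_2)\subseteq\W_1$, so $\varphi_*$ is t-exact. In particular $\varphi_*$ is right t-exact, i.e. $\varphi_*(\V_2)\subseteq\V_1$. Applying Lemma \ref{lr-exact} to the adjoint pair $\varphi_*\dashv\varphi^!$, with $\varphi_*$ playing the role of the left adjoint, turns this into $\varphi^!(\W_1)\subseteq\W_2$, i.e. $\varphi^!$ is left t-exact.

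There is no real obstacle here. The only care needed is bookkeeping in this last step: one must check which side of the adjunction is which when invoking Lemma \ref{lr-exact}, so that the inclusion on aisles for $\varphi_*$ correctly yields the inclusion on coaisles for $\varphi^!$.
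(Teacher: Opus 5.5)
Your proof is correct and follows the same route as the paper, which obtains this proposition by dualizing Proposition \ref{induced-silt}. The equivalence comes from Proposition \ref{char-cosilt} together with Lemma \ref{cond-C}; you additionally check the items of that lemma directly via the adjunction and Lemma \ref{l:phi^-1}(2). The t-exactness of $\varphi_*$ comes from Lemma \ref{l:phi^-1}, and the left t-exactness of $\varphi^!$ from Lemma \ref{lr-exact} applied to $\varphi_*\dashv\varphi^!$, with aisles and coaisles correctly matched.
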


%In the hypotheses of Proposition \ref{induced-silt} before, we say that the property of being silting ascends along $\varphi^*$, provided that $T_1\in\D_1$ is silting implies $T_2=\varphi^*T_1\in\D_2$ is silting.

\section{Ascending for the (co)silting property in tensor triangulated categories}\label{ascending}

Recall that a {\em symmetric monoidal category} is a category $\D$ endowed with a bifunctor
$-\otimes-:\D\times\D\to\D$ called {\em tensor product}, and an object $\BS\in\D$ such that the tensor product is associative, commutative and has $\BS$ as neutral element up to natural isomorphisms, isomorphisms which have to satisfy some compatibility conditions, see \cite[Chapter VII, Section 7]{MacL71}. Such a category is called {\em closed} if for every $X\in\D$ the functor $-\otimes X:\D\to\D$ has a right adjoint $\hom(X,-):\D\to\D$; this adjoint is called the {\em internal hom functor}. Note that if we fix the second variable in the internal hom functor, we get a contravariant functor $\hom(-,Y):\D^\mathrm{op}\to\D$ which sends coproducts into products. A {\em tensor-triangulated category} is a triangulated category endowed with a symmetric monoidal structure such that the tensor product is triangulated in both variables.  If, in addition, $\D$ is closed, then $\hom(X,-)$ and $\hom(-,Y)$ are triangle functors for all $X,Y\in\D$. Note also that the adjunction isomorphism $\D(X\otimes Y,Z)\cong\D(X,\hom(Y,Z))$ has an enriched version:
\[\hom(X\otimes Y,Z)\cong\hom(X,\hom(Y,Z)),\] natural in all three variables. Indeed, this follows from the Yoneda lemma and the chain of natural isomorphisms:
\begin{align*}
    \D(-,\hom(X\otimes Y,Z))&\cong\D(-\otimes X\otimes Y,Z)\cong\D(-\otimes X,\hom(Y,Z)) \\
    &\cong\D(-,\hom(X,\hom(Y,Z))).
\end{align*}

\begin{lemma}\label{lem:aisle-in-tt}
Let $(\D, \otimes, \BS)$ be a closed tensor-triangulated category with coproducts. Assume that $\BS$ is silting, consider an arbitrary t-structure $(\V,\W)$ in $\D$, and an object $X\in\BS[<0]^\perp$.
\begin{itemize}
 \item[\rm (a)] For every $V\in\V$  we have $V\otimes X\in\V$.
 \item[\rm (b)] For every $W\in\W$ we have $\hom(X,W)\in\W$.
\end{itemize}
\end{lemma}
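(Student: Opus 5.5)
The plan is to reduce (a) and (b) to one statement, and then prove that statement by a closure argument starting from $\BS$.

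\emph{Step 1: (a) and (b) are equivalent.} Fix $X$. By the tensor--hom adjunction we have $\D(V\otimes X,W)\cong\D(V,\hom(X,W))$ for all $V\in\V$ and $W\in\W$. Since $\V={^\perp\W}$ and $\W=\V^\perp$, each of the following two assertions is equivalent to the vanishing of these groups for all such $V,W$:
\begin{itemize}
\item $V\otimes X\in\V$ for all $V\in\V$;
\item $\hom(X,W)\in\W$ for all $W\in\W$.
\end{itemize}
So it suffices to prove (a).

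\emph{Step 2: a closure argument.} Fix $V\in\V$ and consider
\[\mathcal{Z}_V=\{Y\in\D\mid V\otimes Y\in\V\}.\]
This class has the following properties.
\begin{itemize}
\item $\BS\in\mathcal{Z}_V$, since $V\otimes\BS\cong V$.
\item $\mathcal{Z}_V$ is closed under positive shifts, because $V\otimes Y[1]\cong(V\otimes Y)[1]$ and $\V[1]\subseteq\V$.
\item It is closed under extensions, because $V\otimes-$ is a triangle functor and $\V$ is closed under extensions.
\item It is closed under coproducts, because $V\otimes-$ is a left adjoint and hence preserves coproducts, while $\V$ is closed under coproducts.
\end{itemize}
Hence $\Susp(\BS)\subseteq\mathcal{Z}_V$ for every $V\in\V$. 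Dually, one could run the same argument with the class of $Y$ such that $\hom(Y,W)\in\W$; this uses that $\hom(-,W)$ sends coproducts to products and that $\W$ is closed under products.

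\emph{Step 3: the main obstacle.} It remains to show that $X\in\BS[<0]^\perp$ lies in $\mathcal{Z}_V$. This would follow from the inclusion $\BS[<0]^\perp\subseteq\Susp(\BS)$ for the silting object $\BS$.

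I would build, for $X\in\BS[<0]^\perp$, a tower
\[X_0\to X_1\to X_2\to\cdots\]
over $X$ as follows.
\begin{itemize}
\item $X_0\in\Add(\BS)$ is the canonical map $\coprod\BS\to X$ indexed by $\D(\BS,X)$.
\item $X_{n+1}$ is obtained by killing the kernel on $\D(\BS[k],-)$, for $k\ge 0$, with a coproduct of copies of $\BS[n+1]$. This is the usual cellular construction.
\end{itemize}
Each $X_n$ lies in $\Susp(\BS)$ by the properties listed in Step 2. Semi-rigidity (S1) and the hypothesis $X\in\BS[<0]^\perp$ ensure that the induced map $\hocolim X_n\to X$ is an isomorphism on $\D(\BS[\Z],-)$. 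By S3 the cone of this map is then $0$, so $X\cong\hocolim X_n$.

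Since the Milnor homotopy colimit is the cone of a map between coproducts of the $X_n$, and $\Susp(\BS)$ is closed under coproducts and extensions (with the appropriate shift), we get $X\in\Susp(\BS)$.

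If this cellular argument turns out to be delicate at the level of generality of the lemma, I would first look for an alternative reference. I expect the known description of silting aisles as $T[<0]^\perp=\Susp(T)$ to be available in the literature already cited (e.g.\ \cite{NSZ19}, \cite{An-19}), and I would invoke it directly if so. Otherwise I would fall back on the Milnor-colimit argument above.
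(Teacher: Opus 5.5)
Your proof is essentially the paper's. Both rest on \cite[Theorem 2]{NSZ19}, which writes each $X\in\BS[<0]^\perp$ as a homotopy colimit of a tower $X_0\to X_1\to\cdots$ whose successive cones lie in $\Add(\BS[n])$. Both then use that $V\otimes-$ preserves coproducts, shifts and triangles, while $\V$ is closed under coproducts, summands, extensions and positive shifts; your class $\mathcal{Z}_V$ simply packages the paper's induction. Your Step 1, which deduces (b) from (a) via $\D(V\otimes X,W)\cong\D(V,\hom(X,W))$, is a neat replacement for the paper's ``dual'' argument.

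One caution: your hand-built cellular tower is not a safe fallback at this level of generality. The claim that $\hocolim X_n\to X$ induces isomorphisms on $\D(\BS[\Z],-)$ needs $\D(\BS,-)$ to commute with Milnor colimits, i.e.\ $\BS$ compact, and the lemma does not assume this. So it is the citation of \cite{NSZ19} that actually carries Step 3, exactly as in the paper.
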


\begin{proof} Note that if $\D$ is a closed tensor-triangulated category, then for every $Y\in\D$ the (covariant) functor $-\otimes Y$ is triangulated and preserves coproducts, therefore it preserves homotopy colimits too. The (contravariant) functor $\hom(-,Y)$ is triangulated and sends coproducts into products therefore it converts homotopy colimits into homotopy limits.

Since $X\in\BS[<0]^\perp$ it follows by \cite[Theorem 2]{NSZ19} (see also \cite[Proposition 4.8]{An-19} or \cite[Section 4]{PS18}) that $X\cong\hocolim_{n\geq0}X_n$ where
\[X_{-1}\stackrel{\xi_{-1}}\to X_0\stackrel{\xi_0}\to X_1\stackrel{\xi_1}\to X_2\stackrel{\xi_2}\to\ldots \]
is a sequence such that $X_{-1}=0$ and for all $n\geq0$ there is a triangle
\[X_{n-1}\stackrel{\xi_{n-1}}\to X_n\to P_n\to X_{n-1}[1]\]
with $P_n\in\Add(\BS[n])$. Clearly, $X_0\cong P_0$.

(a) Since $V\oplus\BS\cong V$ belongs to $\V$, the tensor product commutes with coproducts and $\V$ is smashing, closed under summands and positive shifts, we obtain $V\otimes P_n\in\V$ for all $n\geq0$. Since $\V$ is closed under extensions as well, an easy inductive argument shows that $V\otimes X_n\in\V$, for all $n\geq0$. But $\V$ is closed under homotopy colimits, thus \[V\otimes X\cong V\otimes\hocolim_{n\geq0}X_n\cong\hocolim_{n\geq0}(V\otimes X_n)\in\V  .\]

(b) is the dual of (a).
\end{proof}

Let $(\D_1,\otimes,\BS_1)$ and $(\D_2,\otimes,\BS_2)$ be two symmetric monoidal categories. Recall that a functor $\varphi^*:\D_1\to\D_2$ is called {\em lax monoidal} if it comes togheter with a natural morphism $\varphi^*(-)\otimes\varphi^*(-)\to\varphi^*(-\otimes-)$ and a morphism $\BS_2\to\varphi^*\BS_1$. If in addition these morphisms are supposed to be isomorphisms, then the functor is called {\em strong monoidal} (see \cite[p. 255]{MacL71}). An adjunction $\varphi^*:\D_1\rightleftarrows\D_2:\varphi_*$ between them is called {\em monoidal} if $\varphi^*$ is strong monoidal, case in which $\varphi_*$ is automatically lax monoidal, by \cite[Theorem 1.5]{Ke74}. 

\begin{setting}\label{s:ttcats} Let $(\D_1,\otimes,\BS_1)$ and $(\D_2,\otimes,\BS_2)$ be two closed tensor-triangulated categories.
Suppose that both $\D_1$ and $\D_2$ have products and coproducts, and consider an adjoint triple of triangulated functors $\varphi^*\dashv\varphi_*\dashv\varphi^!$, with the property that $\varphi^*:\D_1\rightleftarrows\D_2:\varphi_*$ is a  monoidal adjunction.
\end{setting}

\begin{expl}\label{e:derived}
Let $A$ and $B$ be two commutative rings, and let $\varphi:A\to B$ be a unital ring homomorphism. It induces a (derived) restriction functor \[\varphi_*:\Der B\to\Der A\hbox{, given by }\varphi_*Y=\RHom_B(B,Y)\cong Y\lotimes_BB.\]  This functor has both a left and a right adjoint, namely the (derived) induction and coinduction functors \[\varphi^*,\varphi^!:\Der A\to\Der B\hbox{, where }\varphi^*X=X\lotimes_AB\hbox{, and }\varphi^!X=\RHom_A(B,X).\]
Then $\Der A$ and $\Der B$ are compactly generated. Since the rings are commutative, it follows that the derived tensor product induces a tensor triangulated structure on the derived categories.  Moreover, they are closed, the internal hom functor being $\RHom$. Finally, it is clear that $\varphi^*=-\lotimes_AB$ is strong monoidal, therefore all hypotheses of Setting \ref{s:ttcats} are satisfied.
\end{expl}

%\begin{expl}\label{e:hopf} This is rather a non-example than an example. Let $A$ be a an algebra over a commutative ring $K$. If $A$ is not commutative, then we can form the so called enveloping algebra $A^e=\A\op\otimes_KA$, so we get an internal tensor product $-\otimes_K-:\Mod{A^e}\times\Mod{A^e}\to\Mod{A^e}$.  
%\end{expl}

The following is a sort of Eilenberg-Watts theorem for monoidal adjunctions between tensor-triangulated categories.

\begin{prop}\label{prop:EW}
 Assume Setting \ref{s:ttcats}, and suppose further that $\D_1=\Loc(\BS_1)$. Then there is an isomorphisms
 $$X\otimes\varphi_*\BS_2\to\varphi_*\varphi^*X\hbox{ and }\hom(\varphi_*\BS_2,X)\to\varphi_*\varphi^! X,$$ that are natural in $X\in\D_1$.
\end{prop}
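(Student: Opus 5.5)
First I construct the natural map $\theta_X: X\otimes\varphi_*\BS_2\to\varphi_*\varphi^*X$. Since $\varphi_*$ is lax monoidal (by \cite[Theorem 1.5]{Ke74}), we have a natural morphism $\varphi_*A\otimes\varphi_*B\to\varphi_*(A\otimes B)$. The unit gives $X\to\varphi_*\varphi^*X$. Set
\[X\otimes\varphi_*\BS_2\to\varphi_*\varphi^*X\otimes\varphi_*\BS_2\to\varphi_*(\varphi^*X\otimes\BS_2)\cong\varphi_*\varphi^*X.\]
Equivalently, $\theta_X$ is the morphism corresponding under $\varphi^*\dashv\varphi_*$ to
\[\varphi^*(X\otimes\varphi_*\BS_2)\cong\varphi^*X\otimes\varphi^*\varphi_*\BS_2\to\varphi^*X\otimes\BS_2\cong\varphi^*X,\]
built from strong monoidality and the counit. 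This description makes it clear that $\theta$ is natural in $X$. It is also compatible with shifts and triangles, since all functors involved are triangulated. For $X=\BS_1$, $\theta_{\BS_1}$ is the composite $\BS_1\otimes\varphi_*\BS_2\cong\varphi_*\BS_2\cong\varphi_*\varphi^*\BS_1$ (using $\varphi^*\BS_1\cong\BS_2$), so it is an isomorphism; a routine check with the unit axioms of the monoidal adjunction confirms this identification.

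Next, consider the full subcategory $\CL=\{X\in\D_1\mid\theta_{X[n]}\text{ is an isomorphism for all }n\}$. It contains $\BS_1$ and is closed under shifts. It is closed under extensions (cones) by the five lemma, since $\theta$ is a morphism of triangulated functors. It is closed under coproducts because both $-\otimes\varphi_*\BS_2$ and $\varphi_*\varphi^*$ preserve coproducts: $\varphi^*$ is a left adjoint, and $\varphi_*$ has the right adjoint $\varphi^!$. Hence $\CL$ is a localizing subcategory containing $\BS_1$, so $\CL=\Loc(\BS_1)=\D_1$. The main technical point is that $\theta$ is a natural transformation of triangle functors (commuting with the suspension isomorphisms). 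I would verify this from the fact that the lax structure and the unit are compatible with the triangulated structures, which holds since the tensor product is triangulated in each variable.

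For the second isomorphism, I would avoid an independent construction and instead use duality with Yoneda. For $Y\in\D_1$ there are natural isomorphisms
\begin{align*}
\D_1(Y,\varphi_*\varphi^!X)&\cong\D_1(\varphi_*\varphi^*Y,X)\cong\D_1(Y\otimes\varphi_*\BS_2,X)\cong\D_1(Y,\hom(\varphi_*\BS_2,X)),
\end{align*}
using $\varphi^*\dashv\varphi_*\dashv\varphi^!$ (so $\varphi_*\varphi^!$ is right adjoint to $\varphi_*\varphi^*$), then the first isomorphism, then tensor-hom adjunction. These isomorphisms are natural in both $Y$ and $X$, so Yoneda gives a natural isomorphism $\hom(\varphi_*\BS_2,X)\cong\varphi_*\varphi^!X$. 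Conceptually, this is the conjugate (mate) of $\theta$ under the adjunctions $-\otimes\varphi_*\BS_2\dashv\hom(\varphi_*\BS_2,-)$ and $\varphi_*\varphi^*\dashv\varphi_*\varphi^!$, and the mate of an isomorphism is an isomorphism.
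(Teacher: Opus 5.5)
Your proposal is correct and follows essentially the same route as the paper. You build the same composite from the unit and the lax structure, check that it is an isomorphism at $\BS_1$, extend it to $\Loc(\BS_1)=\D_1$ because both functors are triangulated and coproduct-preserving, and get the second isomorphism from the first because $\varphi_*\varphi^!$ is right adjoint to $\varphi_*\varphi^*$. You are more explicit than the paper about the localizing-subcategory argument, and about the compatibility of $\theta$ with the suspension isomorphisms, which the paper also uses implicitly.
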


\begin{proof}
 Apply the functor $-\otimes\varphi\BS_2$ to the unit $\id_{\D_1}\to\varphi_*\varphi^*$ of the adjunction between $\varphi^*$ and $\varphi_*$, then use the natural morphism coming from the fact that $\varphi_*$ is lax monoidal and the neutrality of $\BS_2$ with respect to the tensor product in $\D_2$ in order to get the composite natural transformation between functors $\D_1\to\D_1$:
 \[(-)\otimes\varphi_*\BS_2\to\varphi_*\varphi^*(-)\otimes\varphi_*\BS_2\to\varphi_*(\varphi^*(-)\otimes\BS_2)\stackrel{\cong}\to\varphi_*\varphi^*(-).\]
 But $\varphi^*$ is strong, therefore evaluating this natural morphism at $\BS_1$ we get an isomorphism $\BS_1\otimes\varphi_*\BS_2\cong\varphi_*\BS_2\cong\varphi_*\varphi^*\BS_1$. Since both functors $-\otimes\varphi_*\BS_2$ and $\varphi_*\varphi^*$ are triangulated and coproduct preserving, the morphism  is actually an isomorphism evaluated at any $X\in\Loc(\BS_1)=\D_1$.

 For the second isomorphism, we only have to note that $\varphi_*\varphi^!$ is the right adjoint of $\varphi_*\varphi^*$, and to use the first one.
\end{proof}

%\begin{cor}\label{c:coEW}
%Assume Setting \ref{s:ttcats}, and suppose further that $\D_1=\Loc(\BS_1)$. Then there is an isomorphism $\hom(\varphi_*\BS_2,X)\to\varphi_*\varphi^! X$, natural in $X\in\D_1$.
%\end{cor}

%\begin{proof}
 %   We only have to note that $\varphi_*\varphi^!$ is the right adjoint of $\varphi_*\varphi^*$, and apply Proposition \ref{prop:EW}.
%\end{proof}

\begin{thm}\label{thm:silting-ascends} %\textcolor{red}{Ceva ipoteze ca sa se poata aplica \ref{induced-silt} si \ref{prop:EW}.}
In Setting \ref{s:ttcats}, suppose in addition that both tensor units $\BS_1$ and $\BS_2$ are silting objects, and $\D_1=\Loc(\BS_1)$. Then the following statements hold:
\begin{enumerate}[\rm (1)]
 \item The silting property ascends via $\varphi^*$.
 \item The cosilting property ascends via $\varphi^!$.
\end{enumerate}
\end{thm}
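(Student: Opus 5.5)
The plan is to reduce both statements to the formal criteria of Section \ref{general}, namely Proposition \ref{induced-silt} for (1) and Proposition \ref{induced-cosilt} for (2). The identification $\varphi_*T_2=\varphi_*\varphi^*T_1$ (respectively $\varphi_*C_2=\varphi_*\varphi^!C_1$) will be made computable via Proposition \ref{prop:EW}, and membership in the relevant aisle or coaisle will then follow from Lemma \ref{lem:aisle-in-tt}.

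\emph{Standing hypotheses on $\varphi_*$.} Since $\varphi_*$ has the right adjoint $\varphi^!$, it preserves coproducts. Since it has the left adjoint $\varphi^*$, it preserves products. To see that it reflects zero objects, let $\varphi_*Y=0$. Then, using $\varphi^*\BS_1\cong\BS_2$ (strong monoidality),
\[\D_2(\BS_2[\Z],Y)\cong\D_2(\varphi^*\BS_1[\Z],Y)\cong\D_1(\BS_1[\Z],\varphi_*Y)=0.\]
Since $\BS_2$ is silting, it satisfies S3, so $Y=0$.

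\emph{A key auxiliary fact.} We have $\varphi_*\BS_2\in\BS_1[<0]^\perp$. Indeed, by adjunction and $\varphi^*\BS_1\cong\BS_2$,
\[\D_1(\BS_1[<0],\varphi_*\BS_2)\cong\D_2(\BS_2[<0],\BS_2)=0,\]
by S1 for $\BS_2$. Thus $X:=\varphi_*\BS_2$ is a legitimate input for Lemma \ref{lem:aisle-in-tt} in $\D_1$, where $\BS_1$ is silting.

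\emph{Proof of (1).} Let $T_1$ be silting with t-structure $(\V_1,\W_1)=(T_1[<0]^\perp,T_1[\geq0]^\perp)$. By Proposition \ref{induced-silt}, it suffices to show $\varphi_*T_2\in T_1[<0]^\perp=\V_1$. Proposition \ref{prop:EW} applies because $\D_1=\Loc(\BS_1)$, and it gives $\varphi_*T_2=\varphi_*\varphi^*T_1\cong T_1\otimes\varphi_*\BS_2$. Since $T_1\in\V_1$ by S1, Lemma \ref{lem:aisle-in-tt}(a), applied with $V=T_1$ and $X=\varphi_*\BS_2$, gives $T_1\otimes\varphi_*\BS_2\in\V_1$.

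\emph{Proof of (2).} This is dual. Let $C_1$ be cosilting with t-structure $(\V_1,\W_1)=({^\perp C_1[\leq0]},{^\perp C_1[>0]})$. By Proposition \ref{induced-cosilt}, we need $\varphi_*C_2\in{^\perp C_1[>0]}=\W_1$. Proposition \ref{prop:EW} gives $\varphi_*C_2=\varphi_*\varphi^!C_1\cong\hom(\varphi_*\BS_2,C_1)$. By C1 we have $C_1\in\W_1$, so Lemma \ref{lem:aisle-in-tt}(b) yields $\hom(\varphi_*\BS_2,C_1)\in\W_1$.

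I do not expect a serious obstacle. The only points requiring care are confirming that $\varphi^*\BS_1\cong\BS_2$ is what makes the adjunction computations above work, and that Lemma \ref{lem:aisle-in-tt} is applied in $\D_1$ with the arbitrary t-structure taken to be the (co)silting one induced by $T_1$, respectively $C_1$.
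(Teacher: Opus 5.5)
Your proposal is correct and follows the paper's proof essentially step for step. You reduce to Propositions \ref{induced-silt} and \ref{induced-cosilt}, compute $\varphi_*T_2$ and $\varphi_*C_2$ via Proposition \ref{prop:EW}, and conclude with Lemma \ref{lem:aisle-in-tt}. The only cosmetic differences are that you check $\varphi_*\BS_2\in\BS_1[<0]^\perp$ directly by adjunction (this is Lemma \ref{cond-S}(a)), and you state explicitly why $\varphi_*$ preserves coproducts and products, points the paper leaves implicit.
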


\begin{proof}
In a monoidal adjunction, the left adjoint $\varphi^*$ is strongly monoidal, therefore $\varphi^*X\otimes\varphi^*Y\cong\varphi^*(X\otimes Y)$, for all $X,Y\in\D_1$ and $\BS_2\cong\varphi^*\BS_1$. Since S3 holds for $\BS_2$, we know by Lemma \ref{cond-S} (d), that $\varphi_*$ reflects zero objects.

(1). Let $T_1$ be a silting object in $\D_1$ and denote $T_2=\varphi^*T_1\in\D_2$. By the considerations above, the hypotheses of Proposition \ref{prop:EW} are satisfied, therefore it provides an isomorphism \[\varphi_*T_2=\varphi_*\varphi^*T_1\cong T_1\otimes\varphi_*\BS_2.\]
By hypothesis $\BS_2\in\D_2$ is silting, hence the equivalent conditions from Proposition \ref{induced-silt} imply that $\varphi_*\BS_2\in\BS_1[<0]^\perp$. Since $T_1$ belongs to the aisle $T_1[<0]^\perp$ of the respective silting t-structure, Lemma \ref{lem:aisle-in-tt} (a) together with the above isomorphism implies $\varphi_*T_2\in T_1[<0]^\perp$, and the conclusion follows applying Proposition \ref{induced-silt}.

(2). Let $C_1$ be a cosilting object in $\D_1$, and denote $C_2=\varphi^!C_1\in\D_2$. The proof uses the same ideas as in the case (1), with the difference that we need the second isomorphism in Proposition \ref{prop:EW} in order to get an isomorphism
\[\varphi_*C_2=\varphi_*\varphi^!C_1\cong \hom(\varphi_*\BS_2,C_1).\]
Then we apply Lemma \ref{lem:aisle-in-tt} (b) and Proposition \ref{induced-cosilt}.
\end{proof}

An immediate application of Lemma \ref{l:phi^-1} gives:

\begin{cor}\label{c:a-silt-are-eq} Assume the hypotheses of Theorem \ref{thm:silting-ascends}.
\begin{enumerate}[\rm (1)]
 \item If $T_1,T_1'\in\D_1$ are two equivalent silting objects, then the induced silting objects $T_2=\varphi^*T_1,T_2'=\varphi^*T_1'\in\D_2$ are also equivalent.

 \item If $C_1,C_1'\in\D_1$ are two equivalent cosilting objects, then the induced cosilting objects $C_2=\varphi^!C_1, C_2'=\varphi^!C_1'\in\D_2$ are also equivalent.
\end{enumerate}
\end{cor}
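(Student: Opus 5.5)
Since silting objects are determined up to equivalence by their t-structures, I will compare the t-structures induced by $T_2$ and $T_2'$ through Lemma \ref{l:phi^-1}, rather than work with $\Add$ directly. By Theorem \ref{thm:silting-ascends}, $T_2=\varphi^*T_1$ and $T_2'=\varphi^*T_1'$ are silting, so each induces a t-structure, and it remains to show these coincide.

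\emph{Step 1.} Equivalence of $T_1$ and $T_1'$ means
\[T_1[<0]^\perp=T_1'[<0]^\perp\quad\text{and}\quad T_1[\geq0]^\perp=T_1'[\geq0]^\perp.\]

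\emph{Step 2.} Apply Lemma \ref{l:phi^-1}(1) with $N=\{n<0\}$ and with $N=\{n\geq0\}$. This gives
\[T_2[<0]^\perp=\varphi_*^{-1}(T_1[<0]^\perp)=\varphi_*^{-1}(T_1'[<0]^\perp)=T_2'[<0]^\perp,\]
and in the same way $T_2[\geq0]^\perp=T_2'[\geq0]^\perp$. The silting t-structures induced by $T_2$ and $T_2'$ are therefore equal, so $T_2$ and $T_2'$ are equivalent. As a shortcut, the first equality alone would suffice, since the coaisle of a t-structure is determined by its aisle as $\W=\V^\perp$ (the paper's convention writes the coaisle as $T[\geq0]^\perp$).

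\emph{Step 3 (cosilting).} This is dual. Theorem \ref{thm:silting-ascends}(2) shows that $C_2$ and $C_2'$ are cosilting. Lemma \ref{l:phi^-1}(2) then gives ${^\perp C_2[N]}=\varphi_*^{-1}({^\perp C_1[N]})$ for $N=\{n\leq0\}$ and $N=\{n>0\}$, and the equalities for $C_1,C_1'$ transfer to $C_2,C_2'$.

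There is no real obstacle here. The only point needing care is to invoke Theorem \ref{thm:silting-ascends} first, so that the images are known to be (co)silting and "equivalent" makes sense for them. After that the argument is purely formal, and in particular it does not use the tensor structure beyond that theorem.
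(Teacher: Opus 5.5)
Your proposal is correct and is essentially the paper's argument: the corollary is presented as an immediate application of Lemma \ref{l:phi^-1}, with Theorem \ref{thm:silting-ascends} guaranteeing that the induced objects are (co)silting. Transferring the equalities of the (co)silting t-structures via $\varphi_*^{-1}$, as you do, is exactly what that application amounts to.
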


\section{Quasi faithfully flat adjunction}\label{qff}

In what follows, we need some additional terminology. Recall that an object $P$ of a triangulated category $\D$ is called {\em compact} if the functor $\D(P,-)$ preserves coproducts. We write $\D^c$ for the full subcategory of $\D$ consisting of all compact objects. Furthermore, the category $\D$ is called {\em compactly generated} if $\D^c$ is essentially small and generates $\D$ (in the triangulated sense), that is, ${\D^c}^\perp=\{0\}$.

Note that if $\D$ is compactly generated, then both $\D$ and $\D^{\mathrm{op}}$ satisfy Brown representability. It follows $\Loc(T)=\D$ for any object $T$ satisfying $T[\Z]^\perp=\{0\}$, in particular for any silting object. Another consequence is that $\D$ also has products.

%{\color{red}{Purity in compactly generated categories}}

Let $\D$ be a compactly generated triangulated category and denote by $\D^c$ its subcategory consisting of all compact objects. By assumption, $\D^c$ is essentially small, and the category of all additive (contravariant) functors $\D^c\to\Ab$ lives in the same universe as $\D$. We call a {\em (right) $\D^c$-module} a functor as before and, we denote by $\Mod{\D^c}$ the category of all such functors. We consider the restricted Yoneda functor $\yon:\D\to\Mod{\D^c}$, $\yon(X)=\D(-,X)|_{\D^c}$. A triangle $X\stackrel{i}\to Y\stackrel{p}\to Z\stackrel{f}\to X[1]$ in $\D$ is said to be {\em pure} if the induced sequence $0\to\yon(X)\to\yon(Y)\to\yon(Z)\to0$ is exact in
$\Mod{\D^c}$. In this case, we say that $i$ is a pure monomorphism,  $p$ is a pure
epimorphism, $f$ is a phantom map. Furthermore, $X$ is a pure subobject and $Z$ a pure quotient of $Y$.  It is clear that a map $f$ is phantom \iff $\yon(f)=0$
An object $D\in\D$ is called {\em pure projective} if it is projective with respect to pure exact triangles, that is, the sequence \[0\to\D(D,X)\to\D(D,Y)\to\D(D,Z)\to0\] is exact for every pure triangle $X\to Y\to Z\to X[1]$. Dually we define {\em pure injective} objects.
We recall that an object $P\in\D$ is pure projective exactly if $\yon(P)$ is projective in $\Mod{\D^c}$. Another characterization for the same property is that $P\in\Add(\D^c)$. (see \cite[Lemma 8.1 and Proposition 8.4]{B00}). For the dual notion hold $D\in\D$ to be pure injective \iff $\yon(D)$ is injective in $\Mod{\D^c}$ \iff the summation map $D^{(I)}\to D$  factors through the canonical map $D^{(I)}\to D^I$. (see \cite[Theorem 1.8]{K00}). The last condition can be formulated as follows: for each set $I$, there is a morphism
$f:D^I\to D$ such that $f\circ \rho_i = 1_D$, where $\rho_i:D
\to D^I$ is the canonical section, for each $\in I$ (see \cite[Definition 5.1]{SS23}).

In the rest of this section we made the assumptions of Setting \ref{s:tcats} (LR) and, in addition, we suppose that $\D_1$ and $\D_2$ are compactly generated.

\begin{prop}
The functor $\varphi^*$ preserves pure projective objects, and the functor $\varphi^!$ preserves pure injective objects.
\end{prop}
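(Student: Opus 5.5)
The plan is to use the characterizations recalled just before the statement: $P$ is pure projective iff $P\in\Add(\D^c)$, and $D$ is pure injective iff for every set $I$ the summation map $D^{(I)}\to D$ factors through the canonical map $D^{(I)}\to D^I$. Both descriptions transfer through adjoints once we know that $\varphi^*$ preserves compacts and $\varphi^!$ preserves products.

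\emph{Pure projectives.} First, $\varphi^*$ preserves compact objects. Since $\varphi_*$ has a right adjoint $\varphi^!$, it preserves coproducts. So for $P\in\D_1^c$ and a family $(X_i)$ in $\D_2$ we get
\[\D_2\Bigl(\varphi^*P,\coprod X_i\Bigr)\cong\D_1\Bigl(P,\coprod\varphi_*X_i\Bigr)\cong\coprod\D_1(P,\varphi_*X_i)\cong\coprod\D_2(\varphi^*P,X_i),\]
and the composite is the canonical map, by naturality of the adjunction. Second, $\varphi^*$ is a left adjoint, so it preserves coproducts; being additive, it also preserves direct summands. Hence $\varphi^*(\Add(\D_1^c))\subseteq\Add(\D_2^c)$. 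By \cite[Lemma 8.1 and Proposition 8.4]{B00} this gives the first claim.

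\emph{Pure injectives.} Let $D\in\D_1$ be pure injective and fix a set $I$. By the criterion, there is $f:D^I\to D$ with $f\circ\rho_i=1_D$ for all $i$, where $\rho_i:D\to D^I$ are the canonical sections (as in \cite[Definition 5.1]{SS23}). Since $\varphi^!$ is a right adjoint, it preserves products, giving $\varphi^!(D^I)\cong(\varphi^!D)^I$. We must check that under this isomorphism $\varphi^!(\rho_i)$ corresponds to the canonical section of $(\varphi^!D)^I$. The canonical section $\rho_i$ is characterized by its components $1_D$ at $i$ and $0$ elsewhere, and $\varphi^!$ is additive and compatible with the product projections, so this holds. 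Then $g=\varphi^!(f)$, transported along the isomorphism, satisfies $g\circ\rho_i'=\varphi^!(f\circ\rho_i)=1_{\varphi^!D}$ for every $i$. By \cite[Theorem 1.8]{K00}, $\varphi^!D$ is pure injective.

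The only step needing care is this compatibility of $\varphi^!$ with the canonical sections. There is no genuine obstacle, since it reduces to additivity plus product preservation.

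An alternative route for the second part is to dualize the first via $\yon$. One would show that $\varphi^!$ sends pure triangles of $\D_1$ to pure triangles and that $\varphi_*$ preserves pure triangles; the latter follows because $\varphi^*$ preserves compacts, so $\yon(\varphi_*-)$ is $\yon(-)$ restricted along $\varphi^*$. However, the factorization criterion above is more direct, so I would use it.
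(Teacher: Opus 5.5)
Your argument is correct and is the paper's own proof. $\varphi^*$ preserves compacts because $\varphi_*$ preserves coproducts, so it carries $\Add(\D_1^c)$ into $\Add(\D_2^c)$. For $\varphi^!$ you simply unfold, via the factorization criterion, the fact that product-preserving functors preserve pure injectives, which the paper cites as \cite[Lemma 5.3]{SS23}.

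One caution about the alternative you mention but do not use: $\varphi^!$ need not send pure triangles to pure triangles. For $\varphi:\Z\to\Q$, a nonzero phantom $h:\Q\to\Z[1]$ classifying a nonsplit extension $0\to\Z\to E\to\Q\to 0$ gives a non-phantom $\varphi^!h$, since $\D_2(\Q,\varphi^!h)\cong\D_1(\Q,h)$ sends $1_\Q$ to $h\neq 0$. That route needs only the preservation of pure triangles by $\varphi_*$, which you justify correctly.
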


\begin{proof}
    Since $\varphi_*$ is exact and preserves coroducts, it follows that $\varphi^*$ preserves compact objects. Further, it preserves directs summands of coproducts of compact objects, that is, pure projective objects.

The functor $\varphi^!$ preserves products, therefore it preserves pure injective objects, by \cite[Lemma 5.3]{SS23}.
\end{proof}

We will say that the adjunction $\varphi^*\dashv\varphi_*$ is  {\em quasi faithfully flat}, provided that the functor $\varphi^*:\D_1\to\D_2$ reflects the phantom maps, that is, $f:X\to X'$ in $\D_1$ is phantom, whenever $\varphi^*f:\varphi^*X\to\varphi^*X'$ is phantom in $\D_2$. The terminology comes from the fact that in in Example \ref{e:derived}, if the morphism of rings  $\varphi:A\to B$ is faithfully flat, then \cite[Proposition 2.7]{BHM25} says that the derived induction and derived restriction functors form a quasi faithfully flat adjoint pair.

A TTF-triple $(\U,\V,\W)$ is called {\em compactly generated} if there is a set of compact objects $\CP\subseteq\D^c$ such that $\V=\CP^\perp$. 
A (co)silting object is called {\em of (co)finite type} if the (co)silting TTF-triple is compactly generated.

\begin{prop}
If the silting property ascends via $\varphi^*$, then the property "silting of finite type" ascends via $\varphi^*$.
If the cosilting property ascends via $\varphi^!$, then the property "cosilting of cofinite type" ascends via $\varphi^!$.
\end{prop}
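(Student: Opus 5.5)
We work in Setting \ref{s:tcats} (LR) with $\D_1,\D_2$ compactly generated, and we use that $\varphi^*$ preserves compact objects (shown in the previous proposition). Let $T_1$ be silting of finite type. By definition, there is a set $\CP_1\subseteq\D_1^c$ with $T_1[<0]^\perp=\CP_1^\perp$. By hypothesis $T_2=\varphi^*T_1$ is silting, so it only remains to exhibit a set of compact objects $\CP_2\subseteq\D_2^c$ with $T_2[<0]^\perp=\CP_2^\perp$. The natural candidate is $\CP_2=\varphi^*\CP_1$; it consists of compact objects because $\varphi^*$ preserves compactness.

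The key computation is a double application of the adjunction, exactly as in Lemma \ref{l:phi^-1}. For $Y\in\D_2$ we have
\[Y\in(\varphi^*\CP_1)^\perp\iff \D_1(\CP_1,\varphi_*Y)=0\iff\varphi_*Y\in\CP_1^\perp=T_1[<0]^\perp .\]
By Lemma \ref{l:phi^-1}(1), the last condition is equivalent to $Y\in T_2[<0]^\perp$. Hence $T_2[<0]^\perp=(\varphi^*\CP_1)^\perp$, so the silting TTF triple of $T_2$ is compactly generated and $T_2$ is of finite type. Nothing is needed beyond the adjunction and compactness preservation, so no step is a real obstacle here.

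For the cosilting statement, let $C_1$ be cosilting of cofinite type, so the middle class ${^\perp C_1[>0]}$ of its TTF triple equals $\CP_1^\perp$ for a set $\CP_1\subseteq\D_1^c$. We want $\CP_2=\varphi^*\CP_1$ again, and the hard step is to show ${^\perp C_2[>0]}=(\varphi^*\CP_1)^\perp$. On one side, Lemma \ref{l:phi^-1}(2) gives ${^\perp C_2[>0]}=\varphi_*^{-1}({^\perp C_1[>0]})$. On the other side, by adjunction $(\varphi^*\CP_1)^\perp=\varphi_*^{-1}(\CP_1^\perp)$. These two preimages coincide because ${^\perp C_1[>0]}=\CP_1^\perp$. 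Since $C_2$ is cosilting by hypothesis, its TTF triple is therefore compactly generated by the compact set $\varphi^*\CP_1$, and $C_2$ is of cofinite type.

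The only point needing care is the precise definition of cofinite type. I read it as meaning that the middle (aisle) class of the cosilting TTF triple is $\CP^\perp$ for a set of compacts, which matches the paper's definition of a compactly generated TTF triple. With that reading the argument goes through as given.
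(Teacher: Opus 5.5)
Your proof is correct and follows essentially the paper's argument: take $\varphi^*\CP_1$ as the compact generating set, note that $\varphi^*$ preserves compacts, and identify $T_2[<0]^\perp$ with $(\varphi^*\CP_1)^\perp$ through the adjunction equivalences of Lemma \ref{l:phi^-1}. For the cofinite-type case, which the paper only calls ``similar'', you correctly combine the $\varphi_*\dashv\varphi^!$ description ${^\perp C_2[>0]}=\varphi_*^{-1}({^\perp C_1[>0]})$ with the $\varphi^*\dashv\varphi_*$ description $(\varphi^*\CP_1)^\perp=\varphi_*^{-1}(\CP_1^\perp)$.
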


\begin{proof}
Let $T_1\in\D_1$ be a silting object of finite type, and let $T_2=\varphi^*T_1$. By the hypothesis $T_2$ is silting. Let $\CP\subseteq\D_1^c$ be a set of compact objects such that $\CP^\perp= T_1[<0]^\perp$. Since $\varphi^*$ preserves compacts, we infer that $\varphi^*(\CP)\subseteq\D_2^c$.  For an object $Y\in\D_2$ we have the the sequence of equivalent statements (the first of them comes from Lemma \ref{l:phi^-1}):
\begin{align*}
    Y\in T_2[<0]^\perp&\Leftrightarrow \varphi_*Y\in T_1[<0]^\perp\Leftrightarrow \varphi_*Y\in\CP^\perp\Leftrightarrow \D_1(\CP,\varphi_*Y)=0\\
    &\Leftrightarrow \D_2(\varphi^*(\CP),Y)=0\Leftrightarrow  Y\in\varphi^*(\CP)^\perp
\end{align*}
showing that $T_2[<0]^\perp=\varphi^*(\CP)^\perp$, so $T_2$ is of finite type.

For cosilting objects of cofinite type, a similar argument works.
\end{proof}

\begin{lemma}\label{l:pure-tri}
If $\varphi^*\dashv\varphi_*$ is a quasi faithfully flat adjunction, then for every $X\in\D_1$ the triangle obtained via the completion of the unit of the adjunction:
\[X\to\varphi_*\varphi^*X\to X'\to X[1] \] is pure in $\D_1$.
\end{lemma}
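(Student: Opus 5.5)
Denote the triangle by $X\stackrel{\eta_X}\to\varphi_*\varphi^*X\stackrel{p}\to X'\stackrel{f}\to X[1]$, where $\eta$ is the unit. A triangle is pure exactly when its third morphism is phantom: from the long exact sequence of $\yon$, $0\to\yon(X)\to\yon(\varphi_*\varphi^*X)\to\yon(X')\to0$ is exact if and only if $\yon(f)=0$ (and then also $\yon(f[-1])=0$). So I will show that $f$ is phantom. By quasi faithful flatness it is enough to show that $\varphi^*f:\varphi^*X'\to\varphi^*X[1]$ is phantom in $\D_2$. I will in fact aim for the stronger claim $\varphi^*f=0$.

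Apply the triangle functor $\varphi^*$ to get the triangle
\[\varphi^*X\stackrel{\varphi^*\eta_X}\longrightarrow\varphi^*\varphi_*\varphi^*X\stackrel{\varphi^*p}\longrightarrow\varphi^*X'\stackrel{\varphi^*f}\longrightarrow\varphi^*X[1].\]
By the triangle identity, $\varepsilon_{\varphi^*X}\circ\varphi^*\eta_X=1_{\varphi^*X}$, where $\varepsilon$ is the counit. Hence $\varphi^*\eta_X$ is a split monomorphism. In a triangle whose first map is a split mono, the third map vanishes: $\varphi^*f[-1]$ composed with $\varphi^*\eta_X$ is zero, and since $\varphi^*\eta_X$ is split mono this forces $\varphi^*f[-1]=0$. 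Therefore $\varphi^*f=0$, and in particular it is phantom.

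By quasi faithful flatness $f$ is then phantom, so $\yon(f)=0$. The long exact sequence of $\yon$ applied to the triangle then gives $0\to\yon(X)\to\yon(\varphi_*\varphi^*X)\to\yon(X')\to0$, which is exactly purity. The only delicate point is making sure the definition of pure triangle is equivalent to the connecting morphism being phantom. This follows from exactness of $\yon(X'[-1])\to\yon(X)\to\yon(\varphi_*\varphi^*X)\to\yon(X')\to\yon(X[1])$ together with $\yon(f[-1])=\yon(f)[-1]$ up to the shift action, so phantoms are stable under shifts. The argument uses only the left adjunction and the triangle identity; no compact generation beyond the definitions is needed.
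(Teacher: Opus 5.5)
Your proof is correct and takes the same route as the paper. The paper simply observes that $\varphi^*$ sends the unit triangle to a split triangle, so the connecting map is sent to zero (hence to a phantom), and quasi faithful flatness reflects this back to $\D_1$. You spell out the details the paper leaves implicit: the triangle identity makes $\varphi^*\eta_X$ a split monomorphism, which forces the third map to vanish, and purity of a triangle is equivalent to its connecting morphism being phantom.
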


\begin{proof}
The conclusion follows immediately from the assumption on $\varphi^*\dashv\varphi_*$, since $\varphi^*$ sends the triangle above to a split triangle in $\D_2$.
\end{proof}

\begin{prop}  Suppose that $\varphi^*\dashv\varphi_*$ is a quasi faithfully flat adjunction.
\begin{enumerate}[(1)]
\item If the silting property ascends via $\varphi^*$ and $T_1\in\D_1$ is silting of finite type, then $T_1[<0]^\perp=(\varphi^*)^{-1}(T_2[<0]^\perp)$.
 \item If the cosilting property ascends via $\varphi^!$ and $C_1$ is pure injective cosilting, then   $^\perp{C_1[>0]}=(\varphi^*)^{-1}(^\perp{C_2[>0]})$.
\end{enumerate}
\end{prop}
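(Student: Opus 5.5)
The plan is to treat both parts through the pure triangle of Lemma \ref{l:pure-tri} together with Lemma \ref{l:phi^-1}. In each part there are two inclusions to prove. The inclusion ``$\supseteq$'' goes through $\varphi_*\varphi^*X$ and purity. The inclusion ``$\subseteq$'' uses the exactness properties coming from the ascent hypothesis.

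\emph{Part (1).} Put $\V_i=T_i[<0]^\perp$; by hypothesis $T_2$ is silting. For ``$\subseteq$'', Proposition \ref{induced-silt} shows that $\varphi^*$ is right t-exact, i.e. $\varphi^*(\V_1)\subseteq\V_2$; this is exactly the inclusion $\V_1\subseteq(\varphi^*)^{-1}(\V_2)$. For ``$\supseteq$'', let $X\in\D_1$ with $\varphi^*X\in\V_2$. By Lemma \ref{l:phi^-1}(1) we get $\varphi_*\varphi^*X\in\V_1$. Since $T_1$ is of finite type, choose a set $\CP\subseteq\D_1^c$ with $\V_1=\CP^\perp$. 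By Lemma \ref{l:pure-tri}, the unit triangle $X\to\varphi_*\varphi^*X\to X'\to X[1]$ is pure. Hence $\yon(X)\to\yon(\varphi_*\varphi^*X)$ is a monomorphism, so for each $P\in\CP$ the map $\D_1(P,X)\to\D_1(P,\varphi_*\varphi^*X)=0$ is injective. Thus $X\in\CP^\perp=\V_1$.

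\emph{Part (2), inclusion ``$\supseteq$''.} Put $\U_i={^\perp C_i[>0]}$. Lemma \ref{l:phi^-1}(2) gives $Y\in\U_2$ if and only if $\varphi_*Y\in\U_1$. So if $\varphi^*X\in\U_2$, then $\varphi_*\varphi^*X\in\U_1$. The pure triangle above now gives a pure monomorphism $X\to\varphi_*\varphi^*X$. Since $C_1$ is pure injective, so is every shift $C_1[n]$. Therefore $\D_1(\varphi_*\varphi^*X,C_1[n])\to\D_1(X,C_1[n])$ is surjective, and its source vanishes for $n>0$. Hence $X\in\U_1$. This is the exact dual of part (1): pure injectivity of $C_1$ replaces compactness of $\CP$.

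\emph{Part (2), inclusion ``$\subseteq$'' (the main obstacle).} For $X\in\U_1$ we must show $\D_2(\varphi^*X,C_2[n])\cong\D_1(X,\varphi_*C_2[n])$ vanishes for $n>0$. Unlike part (1), Proposition \ref{induced-cosilt} gives no exactness of $\varphi^*$ on $\U_1$ directly. I would first try to show $\varphi_*C_2\in\Prod(C_1)$, which would settle the claim since $\D_1(X,\Prod(C_1)[>0])=0$. To do so I would use the description, dual to \cite[Theorem 2]{NSZ19}, of $\Prod(C_1)$ as the objects of $\U_1$ that are right orthogonal to $\U_1[-1]$. Proposition \ref{induced-cosilt}(ii) already gives $\varphi_*C_2\in\U_1$. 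For the orthogonality, I would use that $C_1$ and $C_2=\varphi^!C_1$ are pure injective cosilting, so that $\U_1,\U_2$ are definable, closed under products, coproducts and pure subobjects. I would then combine this with the t-exactness of $\varphi_*$ from Proposition \ref{induced-cosilt} to control $\D_1(\U_1[-1],\varphi_*C_2)$. If that fails, the fallback is to show $\varphi_*\varphi^*(\U_1)\subseteq\U_1$ directly and then apply Lemma \ref{l:phi^-1}(2). I would attempt this using definability of $\U_1$ and the pure triangle, with $X'$ a pure quotient of $\varphi_*\varphi^*X$. I expect this step to be where the real work lies.
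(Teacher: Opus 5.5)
\textbf{Verdict: there is a genuine gap in part (2), and it cannot be filled because the inclusion you left open is false as stated.}

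Your part (1) is correct and is the argument the paper has in mind: right t-exactness of $\varphi^*$ gives one inclusion, and purity of the unit plus compactness of $\CP$ gives the other. In part (2), the inclusion $(\varphi^*)^{-1}({^\perp C_2[>0]})\subseteq{^\perp C_1[>0]}$ is also correct. In fact your argument gives $(\varphi^*)^{-1}({^\perp C_2[N]})\subseteq{^\perp C_1[N]}$ for every $N\subseteq\Z$.

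The gap is the reverse inclusion, which you leave open. For the coaisle ${^\perp C_1[>0]}$ it fails under the stated hypotheses, as the following example shows.
Take $A=k[x]$, $B=A\times A/(x)$ and $\varphi(a)=(a,\bar a)$, as in Example \ref{e:derived}.

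Then $\varphi_*\varphi^*X\cong X\oplus(X\lotimes_A A/(x))$, and the unit $\eta_X$ has first component $1_X$, so it has a natural retraction $r$. Hence $\varphi^*$ reflects phantoms. If $\varphi^*f$ is phantom, so is $\varphi_*\varphi^*f$, since $\varphi^*$ preserves compacts; and $f=r_{X'}\circ\varphi_*\varphi^*f\circ\eta_X$. Cosilting ascends by Theorem \ref{thm:silting-ascends}.

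Let $C_1=E$ be an injective cogenerator of $\Mod{A}$ placed in degree $0$. It is cosilting, and it is pure injective because the summation map $E^{(I)}\to E$ extends to $E^I$. Here ${^\perp C_1[>0]}$ consists of the complexes with cohomology in degrees $\geq0$, so it contains $X=A/(x)$. But
\[\D_2(\varphi^*X,C_2[1])\cong\D_1(\varphi_*\varphi^*X,E[1])\cong\Hom_A\bigl(\mathrm{Tor}^A_1(A/(x),A/(x)),E\bigr)\neq0,\]
so $\varphi^*X\notin{^\perp C_2[>0]}$.

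Both of your planned routes fail on this example. First, $\varphi_*C_2\cong E\oplus\Hom_A(A/(x),E)$ is not in $\Prod(E)$, because its summand $\Hom_A(A/(x),E)$ is killed by $x$ and so is not injective. Second, $\varphi_*\varphi^*$ does not preserve ${^\perp C_1[>0]}$.

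The paper's proof actually establishes the statement for the aisle. It sets $(\U_i,\V_i)=({^\perp C_i[\leq0]},{^\perp C_i[>0]})$ and proves $\U_1=(\varphi^*)^{-1}(\U_2)$. So the ``$>0$'' in (2) should be read as ``$\leq0$'', parallel to $T_1[<0]^\perp$ in (1), which is also an aisle.

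For the aisle, the inclusion you found hard is immediate. Lemma \ref{l:phi^-1}(2) gives $\varphi_*(\V_2)\subseteq\V_1$. Since $C_1$ and $C_2$ are cosilting, Lemma \ref{lr-exact} turns this into $\varphi^*(\U_1)\subseteq\U_2$.

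The other inclusion is your purity argument with $n\leq0$ in place of $n>0$. The paper obtains $\varphi_*\varphi^*X\in\U_1$ via $\varphi^!(\V_1)\subseteq\V_2$ and $\U_1={^\perp\V_1}$. Your direct appeal to Lemma \ref{l:phi^-1}(2) is equivalent and slightly shorter.
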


\begin{proof}
We will prove (2) since in the cosilting case is somehow surprising that we  have to deal with $\varphi^*$ instead of $\varphi^!$. For the silting case, the argument is similar.

For simplicity we will denote by $(\U_i,\V_i)=\left({^\perp C_i[\leq 0]},{^\perp C_i[>0]}\right)$, $i=1,2$,  the respective cosilting t-structures. By Lemma \ref{l:phi^-1}, we know that $\varphi_*(\U_2)\subseteq\U_1$ and $\varphi_*(\V_2)\subseteq\V_1$. Then $\varphi^*(\U_1)\subseteq\U_2$ and $\varphi^!(\V_1)\subseteq\V_2$, see Lemma \ref{lr-exact}. From  $\varphi^*(\U_1)\subseteq\U_2$ follows $\U_1\subseteq(\varphi^*)^{-1}(\U_2)$. Conversely, if $X\in(\varphi^*)^{-1}(\U_2)$, then \[\D_1(\varphi_*\varphi^*X,Y)\cong\D_2(\varphi^*X,\varphi^!Y)=0,\] for all $Y\in\V_1$, therefore $\varphi_*\varphi^*X\in{^\perp\V_1}=\U_1$. Since $C_1$ is pure injective, $\U_1$ is closed under pure subobjects, therefore $X\in\U_1$ according to Lemma \ref{l:pure-tri}. 

%(1). From the last part of Proposition \ref{induced-silt}, we know that $\varphi^*(T_1[<0]^\perp)\subseteq T_2[<0]^\perp$, therefore $T_1[<0]^\perp\subseteq(\varphi^*)^{-1}(T_2[<0]^\perp)$. Conversely, if $X\in\D_1$ is such that $\varphi^*X\in T_2[<0]^\perp$, then \[\D_1(T_1[<0],\varphi_*\varphi^*X)\cong\D_2(T_2[<0],\varphi^*X)=0,\] so $\varphi_*\varphi^*X\in T_1[<0]^\perp$. By Lemma \ref{l:pure-tri} the map $X\to\varphi_*\varphi^*X$ is a pure monomorphism. But $T_1[<0]^\perp$ is closed under pure subobjects, since it is generated by compact objects, so $X\in T_1[<0]^\perp$.

%(2) For the cosilting case, the dual argument works, observing that $^\perp{C_1[>0]}$ is closed under pure subobjects, provided that $C_1$ is pure injective.
\end{proof}

\begin{cor}\label{c:unique-asc} Suppose that $\varphi^*\dashv\varphi_*$ is a quasi faithfully flat adjunction.
\begin{enumerate}[(1)]
\item If the silting property ascends via $\varphi^*$ and $T_1$ and $T_1'$ are silting of finite type in $\D_1$, such that the silting objects $T_2=\varphi^*T_1$
and $T_2'=\varphi^*T_1'$ are equivalent in $\D_2$ then $T_1$ and $T_1'$ are also equivalent.
\item If the cosilting property ascends via $\varphi^!$ and $C_1$ and $C_1'$ are pure injective cosilting in $\D_1$, such that the cosilting objects $C_2=\varphi^!C_1$
and $C_2'=\varphi^!C_1'$ are equivalent in $\D_2$ then $C_1$ and $C_1'$ are also equivalent.
\end{enumerate}
\end{cor}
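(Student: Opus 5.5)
The corollary follows from the preceding Proposition, which recovers the aisle (respectively the coaisle-type class) of the silting (respectively cosilting) t-structure in $\D_1$ from the one in $\D_2$ via $(\varphi^*)^{-1}$. For (1), let $T_1,T_1'$ be silting of finite type with $T_2=\varphi^*T_1$ and $T_2'=\varphi^*T_1'$ equivalent. Since the silting property ascends, both $T_2$ and $T_2'$ are silting. Equivalence means they induce the same t-structure, so $T_2[<0]^\perp=T_2'[<0]^\perp$. Applying part (1) of the preceding Proposition to $T_1$ and to $T_1'$ gives
\[T_1[<0]^\perp=(\varphi^*)^{-1}(T_2[<0]^\perp)=(\varphi^*)^{-1}(T_2'[<0]^\perp)=T_1'[<0]^\perp.\]

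It remains to pass from equality of aisles to equality of t-structures. For any t-structure the coaisle is determined by the aisle: it is the right orthogonal of $\V[1]$. Concretely, $T_1[\geq0]^\perp=(T_1[<0]^\perp[1])^\perp$, because both sides are the coaisle of the t-structure $(T_1[<0]^\perp,T_1[\geq0]^\perp)$ shifted appropriately. Alternatively one can argue via the torsion pair: for a t-structure $(\V,\W)$ one has $\W[-1]=\V^\perp$, so $\W=(\V[1])^\perp$. Hence equality of aisles forces equality of coaisles, and $T_1$ and $T_1'$ induce the same silting t-structure, i.e.\ they are equivalent.

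For (2) the argument is the same. Equivalence of $C_2$ and $C_2'$ gives ${^\perp C_2[>0]}={^\perp C_2'[>0]}$, and part (2) of the preceding Proposition (using that $C_1,C_1'$ are pure injective) yields ${^\perp C_1[>0]}={^\perp C_1'[>0]}$. The cosilting t-structure is $({^\perp C_1[\leq0]},{^\perp C_1[>0]})$, and its aisle is the left orthogonal of the coaisle shifted: ${^\perp C_1[\le0]}={^\perp}({^\perp C_1[>0]}[-1])$. So equal coaisles give equal t-structures, and $C_1$ and $C_1'$ are equivalent.

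The only point needing any care is checking the shift conventions in the statement that one half of a t-structure determines the other. This is immediate from the torsion pair identities $\U={^\perp\V}$ and $\V=\U^\perp$, so I expect no real obstacle.
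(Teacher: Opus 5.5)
Your strategy is the one the paper intends: the corollary follows from the preceding Proposition (which recovers one half of the t-structure in $\D_1$ as a preimage under $\varphi^*$) together with the fact that one half of a torsion pair determines the other. Part (1) goes through.

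Part (2), however, rests on a false identity. You use ${^\perp C_1[>0]}=(\varphi^*)^{-1}({^\perp C_2[>0]})$, which is how part (2) of the Proposition is printed. Its proof, with $(\U_i,\V_i)=({^\perp C_i[\le0]},{^\perp C_i[>0]})$, actually establishes the aisle identity $\U_1=(\varphi^*)^{-1}(\U_2)$. One inclusion is $\varphi^*(\U_1)\subseteq\U_2$, from Lemma \ref{lr-exact}. The other uses $\D_1(\varphi_*\varphi^*X,Y)\cong\D_2(\varphi^*X,\varphi^!Y)=0$ for $Y\in\V_1$, together with the closure of $\U_1$ under pure subobjects. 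The coaisle version would need $\varphi^*(\V_1)\subseteq\V_2$, i.e.\ left t-exactness of $\varphi^*$, and this fails in general.

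For a counterexample, take $A=k[x]$, $B=A\times k$ and $\varphi(a)=(a,a(0))$. Then $\Der{B}\simeq\Der{A}\times\Der{k}$ and $\varphi^*X=(X,X\lotimes_A k)$. So $\varphi^*$ reflects phantoms, because its first component is the identity, and cosilting ascends by Theorem \ref{thm:silting-ascends}. Let $C_1=E$ be an injective cogenerator of $\Mod{A}$; then $C_2=\varphi^!E$ is an injective cogenerator of $\Mod{B}$. The module $k$ lies in ${^\perp E[>0]}$. But the second component of $\varphi^*k$ has $H^{-1}=\mathrm{Tor}_1^A(k,k)\neq0$, so $\varphi^*k\notin{^\perp C_2[>0]}$.

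The repair is to run your argument on aisles. Equivalence of $C_2$ and $C_2'$ gives ${^\perp C_2[\le0]}={^\perp C_2'[\le0]}$. The aisle identity, which is where pure injectivity of $C_1,C_1'$ enters, then gives ${^\perp C_1[\le0]}={^\perp C_1'[\le0]}$. The coaisles then coincide as right orthogonals.

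Separately, your shift bookkeeping uses the classical $(\D^{\le0},\D^{\ge0})$ convention, which is off by one here. In this paper a t-structure is a torsion pair $(\V,\W)$ with $\D(\V,\W)=0$, so simply $\W=\V^\perp$ and $\V={^\perp\W}$. In fact $(T_1[<0]^\perp[1])^\perp=T_1[>0]^\perp$, not $T_1[\ge0]^\perp$. Likewise ${^\perp}({^\perp C_1[>0]}[-1])={^\perp C_1[<0]}$, not ${^\perp C_1[\le0]}$. This does not damage the logic, since all you need is that one half determines the other. The correct identities are $T_1[\ge0]^\perp=(T_1[<0]^\perp)^\perp$ and ${^\perp C_1[\le0]}={^\perp}({^\perp C_1[>0]})$.
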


%\begin{prop} Suppose $\varphi^*:\D_1\to\D_2$ preserves the phantom maps. If $T_1'\in\D_1$ is such that $\varphi^*T_1'=T_2$ is pure projective, then there is $T_1\in\D_1$ pure projective, such that $\varphi^*T_1=T_2$. If, moreover, $T_2$ is compact, then $T_1$ can also be chosen to be compact.\end{prop}

%\begin{proof}
 %   There is a pure exat triangle $T_1''\to P_1\to T_1'\stackrel{h}\to T_1''[1]$ with pure projective $T_1$ and phantom $h$. (Actually $T_1$ is a pure projective precover of $T_1'$.) Applying $\varphi^*$ we get a pure exact triangle $\varphi$ ????
%\end{proof}

%\begin{rem}
 %Note that cosilting objects of cofinite type are automatically pure injective, but the converse does not hold in general. However, there are known some cases where the two notions coincide, e. g.????

 %in some particular cases we know that pure injective cosilting objects are automatically of cofinite type.
%\end{rem}

\section{Descending for (co)silting property}\label{descending}

\begin{thm}\label{t:silt-desc} In the hypotheses of Setting \ref{s:tcats} (L), suppose the categories $\D_1$ and $\D_2$ to be compactly generated, $\varphi_*$ preserves coproducts, the adjunction $\varphi^*\dashv\varphi_*$ is quasi faithfully flat.  Suppose that $T_1$ is a generator for $\D_1$ and $T_1[<0]^\perp$ is closed under pure subobjects and coproducts. If $T_2$ is a silting object in $\D_2$, then $T_1$ is silting in $\D_1$. In particular, the hypotheses on $T_1$ are satisfied, provided that $T_1$ is a compact generator of $\D_1$; in this case $T_2$ is also a compact generator of $D_2$.
\end{thm}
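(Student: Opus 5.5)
We verify conditions S1--S3 of Proposition \ref{char-silt} for $T_1$. Condition S3 holds because $T_1$ is assumed to be a generator, and S2 holds because $T_1[<0]^\perp$ is assumed closed under coproducts. So the only real work is S1, namely $T_1\in T_1[<0]^\perp$.

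For S1 we use the unit of the adjunction. Completing the unit at $T_1$ gives a triangle
\[T_1\to\varphi_*\varphi^*T_1=\varphi_*T_2\to T_1'\to T_1[1],\]
which is pure in $\D_1$ by Lemma \ref{l:pure-tri}, since the adjunction is quasi faithfully flat. Since $T_2$ is silting, it is semi-rigid, so by Lemma \ref{cond-S}(a) we have $\varphi_*T_2\in T_1[<0]^\perp$. Hence $T_1$ is a pure subobject of an object of $T_1[<0]^\perp$. As $T_1[<0]^\perp$ is closed under pure subobjects, $T_1\in T_1[<0]^\perp$. Proposition \ref{char-silt} then shows that $T_1$ is silting. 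The main point is recognising that quasi faithful flatness makes the unit a pure monomorphism; everything else is formal.

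For the final assertion, let $T_1$ be a compact generator. Then $T_1[<0]^\perp$ is closed under coproducts because $\D_1(T_1[n],-)$ commutes with coproducts.

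It is also closed under pure subobjects. Let $X\to Y$ be a pure monomorphism with $Y\in T_1[<0]^\perp$. Every shift $T_1[n]$ is compact, so $\yon(X)\to\yon(Y)$ is injective when evaluated at $T_1[n]$, i.e.\ $\D_1(T_1[n],X)\to\D_1(T_1[n],Y)$ is injective. The target vanishes for $n<0$, hence so does the source, and $X\in T_1[<0]^\perp$.

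Finally, $T_2=\varphi^*T_1$ is compact: $\varphi_*$ preserves coproducts, so its left adjoint $\varphi^*$ preserves compact objects, as already observed in Section \ref{qff}. Moreover $T_2$ is a generator, being silting (condition S3).
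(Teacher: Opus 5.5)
Your proposal is correct and follows the paper's proof: you verify S2 and S3 directly from the hypotheses, and obtain S1 from Lemma \ref{cond-S}(a) together with the purity of the unit triangle (Lemma \ref{l:pure-tri}) and closure of $T_1[<0]^\perp$ under pure subobjects. You also spell out the compact case that the paper calls obvious, namely closure under coproducts and pure subobjects, and compactness of $T_2$ via $\varphi^*$ preserving compacts.
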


\begin{proof}
    Since $T_1$ is already supposed to satisfy conditions S2 and S3 from Proposition \ref{char-silt}, we need to check only S1. We know by Lemma \ref{cond-S} (a), that $\varphi_*\varphi^*T_1\in T_1[<0]^\perp$. According to Lemma \ref{l:pure-tri}, $T_1$ is a pure subobject of $\varphi_*\varphi^*T_1$, and the closure of $T_1[<0]^\perp$ under pure subobjects leads to the desired conclusion.

    Finally, if $T_1$ is compact, then $T_1[<0]^\perp$ is obviously closed under pure subobjects and coproducts, and $T_2=\varphi^*T_1$ is compact in $\D_2$.
\end{proof}

%\begin{rem}  With the terminology in \cite{AMV20}, the conclusion of Theorem \ref{t:silt-desc} can be reformulated as follows: $T_1$ is partial silting in $\D_1$ (see \cite[Proposition 3.5]{AMV20}).
%\end{rem}

\begin{cor}\label{c:silt-desc} In the hypotheses of Setting \ref{s:ttcats}, suppose that $\BS_2$ is compact in $\D_2$, the adjunction $\varphi^*\dashv\varphi_*$ is quasi faithfully flat, and $\Loc(\varphi_*\BS_2)=\D_1$. If $T_1$ is compact in $\D_1$ and $T_2$ is a silting (necessarily compact) object in $\D_2$, then $T_1$ is also silting.
\end{cor}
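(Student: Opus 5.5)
The plan is to reduce to Theorem \ref{t:silt-desc}, so I will check its hypotheses one by one.

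\textbf{Easy hypotheses.}
\begin{enumerate}[(1)]
\item Setting \ref{s:ttcats} supplies the adjoint triple $\varphi^*\dashv\varphi_*\dashv\varphi^!$. Hence $\varphi_*$ is a left adjoint and preserves coproducts.
\item Quasi faithful flatness is assumed.
\item Compact generation of $\D_1$ and $\D_2$ is tacitly part of the section's standing assumptions, as in Section \ref{qff}.
\item Since $T_1$ is compact, $T_1[<0]^\perp$ is closed under coproducts and under pure subobjects. For pure subobjects: if $X\to Y$ is a pure monomorphism, then $\yon(X)\to\yon(Y)$ is a monomorphism. So for compact $P=T_1[n]$, the group $\D_1(P,X)$ embeds in $\D_1(P,Y)$.
\end{enumerate}
Thus the only hypothesis left is that $T_1$ generates $\D_1$, that is, $T_1[\Z]^\perp=\{0\}$. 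This is exactly where $\Loc(\varphi_*\BS_2)=\D_1$ and the compactness of $\BS_2$ must enter. It is the main obstacle.

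\textbf{Generation of $\D_1$ by $T_1$.} First I will use that $T_2$ is silting, hence generates $\D_2$, and is compact. Compactness holds because $\varphi^*$ preserves compacts when $\varphi_*$ preserves coproducts. So $T_2$ is a compact generator of $\D_2$. By Neeman's theorem the compact object $\BS_2$ lies in $\thick(T_2)$. Applying the triangle functor $\varphi_*$ gives
\[\varphi_*\BS_2\in\thick(\varphi_*T_2)=\thick(\varphi_*\varphi^*T_1),\]
and therefore
\[\D_1=\Loc(\varphi_*\BS_2)\subseteq\Loc(\varphi_*\varphi^*T_1).\]
It now suffices to show $\varphi_*\varphi^*T_1\in\Loc(T_1)$. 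Then $\Loc(T_1)=\D_1$, so $T_1[\Z]^\perp=\Loc(T_1)^\perp=\{0\}$.

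\textbf{The remaining inclusion.} For $\varphi_*\varphi^*T_1\in\Loc(T_1)$ I would use a projection formula,
\[\varphi_*\varphi^*T_1\cong T_1\otimes\varphi_*\BS_2,\]
built from the natural map in the proof of Proposition \ref{prop:EW}. Since $\BS_2$ is compact, $\varphi_*$ preserves coproducts and the tensor product is exact, the class of $A\in\D_1$ for which this map is invertible is localizing. It contains $\BS_1$, and I would show it contains $T_1$ by using that compact objects are rigid when the unit is compact. Then:
\begin{enumerate}[(1)]
\item Write $\varphi_*\BS_2$ inside $\Loc(\BS_1)$, if available.
\item Alternatively, argue directly with the class
\[\{Y\in\D_1\mid T_1\otimes Y\in\Loc(T_1)\}.\]
This class is localizing and contains $\BS_1$. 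I would try to show it contains $\varphi_*\BS_2$, again via $\BS_2\in\thick(T_2)$ and the monoidality of $\varphi^*$.
\end{enumerate}
I expect this last step to be the delicate point. If it fails, the fallback is to dualize: use the dual $T_1^\vee$ to rewrite $X\in T_1[\Z]^\perp$ as $\hom(T_1,X)\in\BS_1[\Z]^\perp$, and then test against $\varphi_*\BS_2$ through the adjunction $\varphi_*\dashv\varphi^!$.

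Once generation is established, Theorem \ref{t:silt-desc} yields that $T_1$ is silting.
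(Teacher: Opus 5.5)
\textbf{There is a genuine gap at the step you flag yourself.} Your reduction to Theorem \ref{t:silt-desc} matches the paper's proof. So does your check of its easy hypotheses, and so does the first half of the generation argument ($\BS_2\in\thick(T_2)$, hence $\D_1=\Loc(\varphi_*\BS_2)\subseteq\Loc(\varphi_*\varphi^*T_1)$). The paper then runs the same chain $\D_1=\Loc(\varphi_*\BS_2)\subseteq\Loc(T_1\otimes\varphi_*\BS_2)\subseteq\Loc(T_1)$. What you never prove is $\varphi_*\varphi^*T_1\in\Loc(T_1)$, and two things are missing.

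\emph{The projection formula.} You want $\varphi_*\varphi^*T_1\cong T_1\otimes\varphi_*\BS_2$ and justify it by ``compact objects are rigid when the unit is compact''. That is not true in general; it needs the compacts to be built from $\BS_1$, e.g.\ $\D_1^c=\thick(\BS_1)$. For instance, take representations of the quiver $1\to 2$ over a field with the pointwise tensor product. The unit is the compact projective $P_1$, and $P_2$ is compact, but $\hom(P_2,\BS)\otimes Y=Y$ while $\hom(P_2,Y)$ only sees $Y$ at vertex $2$, so $P_2$ is not rigid.

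\emph{The inclusion $T_1\otimes\varphi_*\BS_2\in\Loc(T_1)$.} This is the real content, and none of your routes establishes it. Option (1) is left conditional (``if available''). Option (2), passing again through $\BS_2\in\thick(T_2)$, only returns a question of the same type, namely whether $T_1\otimes\varphi_*\varphi^*T_1$ lies in $\Loc(T_1)$. The fallback needs $T_1$ rigid, and then, to pass from $\hom(T_1,X)\in\BS_1[\Z]^\perp$ to vanishing against $\varphi_*\BS_2$, it needs $\varphi_*\BS_2\in\Loc(\BS_1)$, which is the same missing input.

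\textbf{What the paper uses.} The missing input is control of $\varphi_*\BS_2$ by $\BS_1$, and it does not follow from the hypotheses you list. The paper's proof takes it from the standing assumptions of Theorem \ref{thm:silting-ascends}: $\BS_1$ and $\BS_2$ silting, and $\D_1=\Loc(\BS_1)$. These are not repeated in the statement of the corollary.
\begin{enumerate}
\item The projection formula is then Proposition \ref{prop:EW}, valid for every object because $\D_1=\Loc(\BS_1)$, with no rigidity needed.
\item Since $\BS_2\cong\varphi^*\BS_1$ is semi-rigid, Lemma \ref{cond-S}(a) gives $\varphi_*\BS_2\in\BS_1[<0]^\perp$.
\item Because $\BS_1$ is silting, \cite[Theorem 2]{NSZ19} writes $\varphi_*\BS_2\cong\hocolim_n X_n$, with each $X_n$ an iterated extension of objects $P_k\in\Add(\BS_1[k])$.
\item Tensoring with $T_1$ gives $T_1\otimes P_k\in\Add(T_1[k])$. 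By induction $T_1\otimes X_n\in\Loc(T_1)$, hence $T_1\otimes\varphi_*\BS_2\in\Loc(T_1)$.
\end{enumerate}

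Once you grant $\D_1=\Loc(\BS_1)$, your option (2) closes at once and more simply than the paper's argument. This equality follows from $\BS_1$ being silting in the compactly generated $\D_1$. The class $\{Y\in\D_1\mid T_1\otimes Y\in\Loc(T_1)\}$ is localizing and contains $\BS_1$, hence is all of $\D_1$. You must invoke such a hypothesis explicitly; without it the decisive step of your proposal remains open.
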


\begin{proof}
 In order to apply Theorem \ref{t:silt-desc} we only have to show that $T_1$ is a generator for $\D_1$, or equivalently $\Loc(T_1)=\D_1$.
 By hypothesis, $T_2$ is a compact generator of $\D_2$, therefore, $\D_2^c=\thick(T_2)$. As for any other object of $\D_2^c$, we know that $\BS_2\in\add(T_2)[-l]*\ldots*\add(T_2)[l]$, for some $l\geq0$. Therefore, \[\varphi_*\BS_2\in\add(\varphi_*T_2)[-l]*\ldots*\add(\varphi_*T_2)[l]=\add(T_1\otimes\varphi_*\BS_2)[-l]*\ldots*\add(T_1\otimes\varphi_*\BS_2)[l].\]
We conclude this paragraph, recording the inclusion $\Loc(\varphi_*\BS_2)\subseteq\Loc(T_1\otimes\varphi_*\BS_2)$.

 As we noted in the proof of Theorem \ref{thm:silting-ascends}, $\varphi_*\BS_2\in\BS_1[<0]^\perp$, therefore $\varphi_*\BS_2=\hocolim_{n\geq0}X_n$, where $X_{n-1}\to\X_n\to P_n\to X_{n-1}[1]$ is a triangle with $X_{-1}=0$, and $P_n\in\Add(\BS_1[n])$, for all $n\geq0$ (see \cite[Theorem 2]{NSZ19}; the argument was already used in the proof of Lemma \ref{lem:aisle-in-tt}). Since the tensor functor is triangulated and commutes with coproducts, we obtain $T_1\otimes\varphi_*\BS_2\cong\hocolim_{n\geq0}(T_1\otimes X_n)$.
 Inductively, we get $T_1\otimes X_n\in\Loc(T_1)$, and therefore $T_1\otimes\varphi_*\BS_2\in\Loc(T_1)$. This implies further $\Loc(T_1\otimes\varphi_*\BS_2)\subseteq\Loc(T_1)$. The chain of inclusions
 \[\D_1=\Loc(\varphi_*\BS_2)\subseteq\Loc(T_1\otimes\varphi_*\BS_2)\subseteq\Loc(T_1)\subseteq\D_1\] shows the desired equality.
\end{proof}

Corrollary \ref{c:silt-desc} should be compared with \cite[Proposition 6.4]{BHM25}. With the notations made in Example \ref{e:derived}, this Proposition says that if $T_1$ is isomorphic to a bounded complex of projectives such that $T_2$ is silting, and $\Loc(B)=\Der A$,  then $T_1$ is silting. Our hypothesis $T_1$ is compact is stronger than $T_1$ is isomorphic to a bounded complex of projectives. We were forced to assume this stronger hypothesis because in the general settings we worked we did not find a translation of the following fact: If $T_1\in\Der A$ is isomorphic to a bounded complex projectives, then it can be interpreted as a module over an appropriate $A$-algebra such that $T_1$ is silting if and only if the respective module is tilting (see \cite[6.1]{BHM25}).

Let $(\D,\otimes,\BS)$ be a compactly generated closed tensor-triangulated category. For any two objects $X,Y\in\D$ there is morphism $\hom(X,\BS)\otimes Y\to\hom(X,Y)$, natural in both variables. The object $X$ is called {\em rigid} if this morphism is an isomorphism for all $Y\in\D$.  (see \cite[Definition 1.3]{S16}). The category $\D$ is called {\em rigidly-compactly generated}, provides that the rigid and compact objects coincide. It is clear that in such categories the tensor unit $\BS$ must be compact since it is rigid. Note that if $A$ is a commutative ring, as in Example \ref{e:derived}, then the derived category $\Der A$ is rigidly-compactly generated (see also \cite[Example 2.7]{S16}).

Let $\D$ be a rigidly-compactly generated closed tensor-triangulated category. Following \cite{BW24}, we construct a (contravariant) functor $(-)^+:\D^\mathrm{op}\to\D$, called duality, as follows: For every object $X\in\D$ the functor $\Hom_\Z(\D(\BS,X\otimes-),\Q/\Z):\D^\mathrm{op}\to\Mod{\Z}$ is cohomological and sends coproducts to products. By Brown representability theorem, it must be representable. The object representing it is unique up to a natural isomorphism and will be denoted by $X^+$.  Explicitly, we have: \[\Hom_\Z(\D(\BS,X\otimes-),\Q/\Z)\cong\D(-,X^+).\]
From the very definition of the functor $(-)^+$, it follows that it converts coproducts into products. According to \cite[Proposition 3.13]{BW24}, the triple $(\D,\D,(-)^+)$ is a duality triple in the sense of \cite[Definition 3.4]{BW24}. In particular, $D^+$ is pure injective for any $D\in\D$, and for any $X\in\D$ there is a natural map $i_X:X\to X^{++}$ which is a pure monomorphism. Using these properties, the proof of the following lemma is straightforward.

\begin{lemma}\label{l:pi+} Let $\D$ be a rigidly-compactly generated closed tensor-triangulated category. The following are equivalent for an object $X\in\D$:
\begin{enumerate}[\rm (i)]
    \item $X\in\D$ is pure injective.
    \item The canonical map $i_X:X\to X^{++}$ splits.
    \item $X$ is a direct summand in an object of the form $D^+$, for some $D\in\D$.
\end{enumerate}
\end{lemma}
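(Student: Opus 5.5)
I will prove the cycle (i)$\Rightarrow$(ii)$\Rightarrow$(iii)$\Rightarrow$(i), using only the properties of the duality $(-)^+$ recalled above: $D^+$ is pure injective for every $D\in\D$, and $i_X:X\to X^{++}$ is a pure monomorphism for every $X\in\D$. Both come from the fact that $(\D,\D,(-)^+)$ is a duality triple \cite[Proposition 3.13]{BW24}.

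(i)$\Rightarrow$(ii). Complete $i_X$ to a triangle $X\stackrel{i_X}\to X^{++}\to Z\stackrel{h}\to X[1]$. This triangle is pure because $i_X$ is a pure monomorphism, so $h$ is phantom. Since $X$ is pure injective, applying $\D(-,X)$ to the pure triangle gives a surjection $\D(X^{++},X)\to\D(X,X)$. Hence $1_X$ lifts to some $r:X^{++}\to X$ with $r\circ i_X=1_X$, and $i_X$ splits. Equivalently, one may say that $\yon(i_X)$ is a monomorphism with injective source $\yon(X)$, so it splits in $\Mod{\D^c}$; a phantom triangle with split $\yon$-image whose third map is a phantom into a pure injective object then splits. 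I prefer the first formulation, which uses only the definition of pure injectivity given earlier: projectivity of $X$ with respect to pure triangles is replaced by injectivity, so $\D(-,X)$ sends the pure triangle to a short exact sequence.

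(ii)$\Rightarrow$(iii) is immediate: take $D=X^+$, so that $X$ is a direct summand of $X^{++}=D^+$.

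(iii)$\Rightarrow$(i). The object $D^+$ is pure injective, and pure injective objects are closed under direct summands. This is clear from the characterization that $X$ is pure injective exactly when $\yon(X)$ is injective in $\Mod{\D^c}$, since $\yon$ is additive and summands of injectives are injective.

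The only place that needs care is the first implication: one has to check that the exact-sequence formulation of pure injectivity really applies to the triangle built on $i_X$. This holds precisely because that triangle is pure by definition, that is, $\yon$ sends it to a short exact sequence. Everything else reduces to the recalled facts from \cite{BW24}.
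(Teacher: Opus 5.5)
Your proof is correct and is the argument the paper has in mind; the paper gives no proof beyond calling it straightforward from the two recalled facts, that $D^+$ is pure injective and that $i_X$ is a pure monomorphism. As you do, one gets (i)$\Rightarrow$(ii) by lifting $1_X$ along the pure triangle built on $i_X$, (ii)$\Rightarrow$(iii) by taking $D=X^+$, and (iii)$\Rightarrow$(i) from the closure of pure injectives under direct summands.
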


\begin{lemma}\label{l:x+} Let $\D$ be a rigidly-compactly generated closed tensor-triangulated category. Then we have the following isomorphisms, natural in all variables:
\begin{enumerate}
    \item $X^+\cong\hom(X,\BS^+)$.
    \item $\D(X,Y^+)\cong\D(Y,X^+)$, that is $(-)^+:\D\to\D$ is a contravariant self-adjoint.
    \item $\hom(X,Y^+)\cong(X\otimes Y)^+\cong\hom(Y^+,X)$, natural in $X$ and $Y$.
\end{enumerate}
\end{lemma}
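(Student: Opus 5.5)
The three isomorphisms all come from the defining formula $\D(-,X^+)\cong\Hom_\Z(\D(\BS,X\otimes-),\Q/\Z)$, the tensor--hom adjunction, and the Yoneda lemma. Throughout I write $\mathbb{D}(A)=\Hom_\Z(A,\Q/\Z)$ for the Pontryagin dual of an abelian group.

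For (1), I compare the represented functors. For any $Z\in\D$, the adjunction together with the definition of $\BS^+$ (using $\BS\otimes$ as the identity) gives
\[\D(Z,\hom(X,\BS^+))\cong\D(Z\otimes X,\BS^+)\cong\mathbb{D}\big(\D(\BS,\BS\otimes Z\otimes X)\big)\cong\mathbb{D}\big(\D(\BS,X\otimes Z)\big)\cong\D(Z,X^+).\]
The last two steps use the symmetry of $\otimes$ and the definition of $X^+$. Each isomorphism is natural in $Z$ and in $X$, so Yoneda gives $X^+\cong\hom(X,\BS^+)$ naturally in $X$.

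For (2), the definition gives $\D(X,Y^+)\cong\mathbb{D}(\D(\BS,Y\otimes X))$ and $\D(Y,X^+)\cong\mathbb{D}(\D(\BS,X\otimes Y))$. These are identified by the symmetry isomorphism $X\otimes Y\cong Y\otimes X$, naturally in both variables. So $(-)^+$ is adjoint to itself on the right, as a contravariant functor.

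For (3), I first apply (1) and then the enriched adjunction $\hom(X\otimes Y,Z)\cong\hom(X,\hom(Y,Z))$ recorded in Section~\ref{ascending}:
\[(X\otimes Y)^+\cong\hom(X\otimes Y,\BS^+)\cong\hom(X,\hom(Y,\BS^+))\cong\hom(X,Y^+).\]
By symmetry of $\otimes$ this also equals $\hom(Y,X^+)$.

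The remaining identification is $(X\otimes Y)^+\cong\hom(Y^+,X)$, which I read as $\hom(Y,X^+)$, possibly a typo in the statement. If the statement is literal, I would treat it with an internal Yoneda argument. I would compute $\D(Z,\hom(Y^+,X))\cong\D(Z\otimes Y^+,X)$ and try to match it with $\D(Z,(X\otimes Y)^+)\cong\D(X\otimes Y\otimes Z,\BS^+)$. In general this requires rigidity of $Y$, via $\hom(Y,\BS)\otimes-\cong\hom(Y,-)$, so I expect that version holds only for compact $Y$.

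Because of this, the main obstacle is deciding the intended form of the last isomorphism. The naturality checks elsewhere are routine, since every step is a natural isomorphism of representable functors.
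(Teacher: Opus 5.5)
Your proof is correct and is essentially the paper's. The paper cites \cite[Lemma 3.2]{BW24} for (1), where you give the direct Yoneda argument, and it derives (2) from (1) via the tensor--hom adjunction, where you go straight to the defining formula; both arguments reduce to the (enriched, for (3)) adjunction plus the symmetry of $\otimes$.

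Your reading of the last term as $\hom(Y,X^+)$ is the intended one; it is exactly what the paper's ``same argument as for (2)'' produces. Drop the guess that the literal $\hom(Y^+,X)$ version holds for compact $Y$. Already for $X=Y=\BS$ in $\Der{\Z}$ one has $\BS^+\cong\Q/\Z$ in degree $0$, whereas $\hom(\BS^+,\BS)\cong\RHom_{\Z}(\Q/\Z,\Z)\cong\widehat{\Z}[-1]$.
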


\begin{proof} (1) is (contained in) \cite[Lemma 3.2]{BW24} (ii).

 (2)  Using (1) and the adjunction isomorphism we have:
    \[ \D(X,Y^+)\cong\D(X,\hom(Y,\BS^+))\cong\D(X\otimes Y,\BS^+)\cong\D(Y\otimes X,\BS^+)\cong\D(Y,X^+).\]

(3) Use the same argument as for (2), but with the enriched version of the adjunction isomorphism.
\end{proof}

\begin{prop}\label{p:iso+}  In Setting \ref{s:ttcats}, assume that $\D_1$ and $\D_2$ are rigidly-compactly generated closed tensor-triangulated categories. Then:
\begin{enumerate}
    \item There is a natural isomorphism $(\varphi_*\varphi^*X)^+\cong\varphi_*\varphi^!(X^+),$ for all $X\in\D_1$
    \item Assume also that $\varphi_*(Y^+)\cong(\varphi_*Y)^+$, for all $Y\in\D_2$. Then here is a natural isomorphism $(\varphi^*X)^+\cong\varphi^!(X^+),$ for all $X\in\D_1$.
\end{enumerate}

\end{prop}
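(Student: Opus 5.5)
Both isomorphisms will be obtained by Yoneda: for each statement we compute the functor $\D_1(-,\cdot)$ or $\D_2(-,\cdot)$ represented by the two sides and compare them using the adjunctions and Lemma \ref{l:x+}.

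\textbf{Part (1).} By Lemma \ref{l:x+}(2) we have
\[\D_1\bigl(Z,(\varphi_*\varphi^*X)^+\bigr)\cong\D_1(\varphi_*\varphi^*X,Z^+)\]
for every $Z\in\D_1$. Rather than push further with adjunctions, we use Proposition \ref{prop:EW}. Its hypothesis $\D_1=\Loc(\BS_1)$ holds, since $\D_1$ is rigidly-compactly generated: $\BS_1$ is then compact, and we would like it to generate. Proposition \ref{prop:EW} gives
\[\varphi_*\varphi^*X\cong X\otimes\varphi_*\BS_2\quad\hbox{and}\quad \varphi_*\varphi^!(X^+)\cong\hom(\varphi_*\BS_2,X^+).\]
Lemma \ref{l:x+}(3) then yields
\[(X\otimes\varphi_*\BS_2)^+\cong\hom(\varphi_*\BS_2,X^+).\]
Composing these three isomorphisms proves (1), and naturality in $X$ follows because every isomorphism used is natural.

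The main obstacle is precisely the hypothesis $\D_1=\Loc(\BS_1)$. It is not automatic for rigidly-compactly generated categories, where compact objects are only rigid, not necessarily built from $\BS_1$. So it should be read as part of the ambient assumptions (as in Theorem \ref{thm:silting-ascends}), or the argument must avoid it.

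To avoid it, we argue directly. For $Z\in\D_1$ we have
\[\D_1(Z,\varphi_*\varphi^!X^+)\cong\D_1(\varphi_*\varphi^*Z,X^+)\cong\D_1(X,(\varphi_*\varphi^*Z)^+).\]
To finish we then need a projection formula $\varphi_*(\varphi^*X\otimes Y)\cong X\otimes\varphi_*Y$. For rigidly-compactly generated categories with $\varphi_*$ preserving coproducts, this formula holds: both sides are coproduct-preserving triangulated functors of $X$, the comparison map from lax monoidality is an isomorphism on rigid objects, and rigid objects coincide with compact objects, which generate. With the projection formula, $\varphi_*\varphi^*Z\cong Z\otimes\varphi_*\BS_2$. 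Using the enriched adjunction together with Lemma \ref{l:x+}, both sides then reduce to $\D_1(X\otimes Z\otimes\varphi_*\BS_2,\BS_1^+)$, and Yoneda concludes (1) without invoking $\Loc(\BS_1)$.

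\textbf{Part (2).} For $Z\in\D_2$ we compute
\begin{align*}
\D_2(Z,\varphi^!(X^+))&\cong\D_1(\varphi_*Z,X^+)\cong\D_1(X,(\varphi_*Z)^+)\cong\D_1(X,\varphi_*(Z^+))\\
&\cong\D_2(\varphi^*X,Z^+)\cong\D_2(Z,(\varphi^*X)^+).
\end{align*}
The steps use, in order, the adjunction $\varphi_*\dashv\varphi^!$, Lemma \ref{l:x+}(2), the extra hypothesis $\varphi_*(Y^+)\cong(\varphi_*Y)^+$, the adjunction $\varphi^*\dashv\varphi_*$, and Lemma \ref{l:x+}(2) again. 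Every step is natural in $Z$ and $X$, so Yoneda gives $(\varphi^*X)^+\cong\varphi^!(X^+)$.
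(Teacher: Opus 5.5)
Your part (2) is the paper's argument: the paper computes $\D_2(Y,(\varphi^*X)^+)$ and $\D_2(Y,\varphi^!(X^+))$ separately and matches them using the hypothesis, while you string the same isomorphisms into one chain. Your first route for (1) is exactly the paper's proof, namely $(\varphi_*\varphi^*X)^+\cong(\varphi_*\BS_2\otimes X)^+\cong\hom(\varphi_*\BS_2,X^+)\cong\varphi_*\varphi^!(X^+)$, obtained from Proposition~\ref{prop:EW} and Lemma~\ref{l:x+}(3).

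Your objection to that route is justified. The paper applies Proposition~\ref{prop:EW} without checking $\D_1=\Loc(\BS_1)$. This is not part of Setting~\ref{s:ttcats}, and it does not follow from rigid-compact generation: in $\mathbf{D}_{qc}(\mathbb{P}^1)$ one has $0\neq\mathcal{O}(-1)\in\mathcal{O}[\Z]^\perp$. The gap is harmless in the last theorem, where $\BS_1$ is a compact silting object, but the condition is not among the hypotheses of Theorem~\ref{t:cosilt-desc}.

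Your second route genuinely removes the issue, and it is correct.
\begin{itemize}
\item The projection map $X\otimes\varphi_*Y\to\varphi_*(\varphi^*X\otimes Y)$ is invertible for rigid $X$, because the strong monoidal functor $\varphi^*$ preserves duals.
\item Both sides are coproduct-preserving triangle functors of $X$; here you use that $\varphi_*$ has the right adjoint $\varphi^!$.
\item The compact ($=$ rigid) objects generate $\D_1$, so the isomorphism extends to all of $\D_1$.
\end{itemize}
In effect you prove Proposition~\ref{prop:EW} with ``$\D_1$ rigidly-compactly generated'' in place of ``$\D_1=\Loc(\BS_1)$''. After that, your Yoneda comparison through $\D_1(X\otimes Z\otimes\varphi_*\BS_2,\BS_1^+)$ goes through.

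Two things to tidy. First, delete the sentence claiming that $\D_1=\Loc(\BS_1)$ ``holds'' before you retract it. Second, add a line saying that rigid in the paper's sense is the same as strongly dualizable. Also note that the isomorphism obtained from duality is the one induced by the lax-monoidal projection map (the standard Fausk--Hu--May argument). This matters because the localizing-subcategory argument needs a single natural transformation defined for all $X$.
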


\begin{proof}
 (1).  We apply Proposition \ref{prop:EW} and Lemma \ref{l:x+} to get the chain of isomorphisms \[(\varphi_*\varphi^*X)^+\cong(\varphi_*\BS_2\otimes X)^+\cong\hom(\varphi_*\BS_2,X^+)\cong\varphi_*\varphi^!(X^+).\]

 (2). For all $X\in\D_1$ and all $Y\in\D_2$ we obtain the isomorphisms
 \[\D_2(Y,(\varphi^*X)^+)\cong\D_2(\varphi^*X,Y^+)\cong\D_1(X,\varphi_*(Y^+)),\] and
 \[\D_2(Y,\varphi^!(X^+))\cong\D_1(\varphi_*Y,X^+)\cong\D_1(X,(\varphi_*Y)^+).\]
 Now the conclusion follows from the hypothesis and the Yoneda lemma.
\end{proof}

\begin{thm}\label{t:cosilt-desc} Assume the hypotheses made in Setting \ref{s:ttcats}. In addition, suppose $\D_1$ is a rigidly-compactly generated closed tensor-triangulated category and the adjunction $\varphi^*\dashv\varphi_*$ is quasi faithfully flat. If $C_1$ is a pure injective object in $\D_1$, that cogenerates $\D_1$ and $C_2=\varphi^!C_1$ is a cosilting (necessarily pure injective) object in $\D_2$, then $C_1$ is cosilting in $\D_1$.
\end{thm}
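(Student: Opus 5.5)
We will verify conditions C1--C3 of Proposition \ref{char-cosilt} for $C_1$. Condition C3 holds by hypothesis, since $C_1$ cogenerates $\D_1$. For C2 we use that $C_1$ is pure injective. Over a compactly generated category, the class ${^\perp C_1[>0]}$ is then definable. Equivalently, by \cite[Theorem 1.8]{K00}, $\D_1(-,C_1[n])$ restricted to pure triangles behaves well: it is closed under products, pure subobjects and pure quotients. It is therefore closed under products. So the main work is C1, namely $C_1\in{^\perp C_1[>0]}$. For this we will mimic the proof of Theorem \ref{t:silt-desc}, but dually, using the duality $(-)^+$ to turn pure monomorphisms and unit maps into statements about $\varphi_*\varphi^!$.

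The key steps are as follows. First, since $C_2$ is cosilting, Lemma \ref{cond-C}(a) gives $\varphi_*C_2=\varphi_*\varphi^!C_1\in{^\perp C_1[>0]}$. Next, by Lemma \ref{l:pi+}, $C_1$ is a direct summand of $X^+$ for some $X\in\D_1$. Concretely, we take $X=C_1^+$, because $i_{C_1}:C_1\to C_1^{++}$ splits. Applying Proposition \ref{p:iso+}(1) to $X=C_1^+$, we get
\[\varphi_*\varphi^!(C_1^{++})\cong(\varphi_*\varphi^*(C_1^+))^+.\]
By Lemma \ref{l:pure-tri}, the unit $C_1^+\to\varphi_*\varphi^*C_1^+$ is a pure monomorphism, since the adjunction is quasi faithfully flat. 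Applying $(-)^+$, we will argue that $(\varphi_*\varphi^*C_1^+)^+\to C_1^{++}$ is a split epimorphism. Indeed, $(-)^+$ turns pure triangles into pure triangles, because $\yon$ of them is computed by Hom into $\Q/\Z$, which is exact. The resulting pure triangle ends in a pure injective object $C_1^{+++}$, so the phantom connecting map vanishes and the triangle splits. Hence $C_1^{++}$ is a direct summand of $\varphi_*\varphi^!(C_1^{++})$, and so $C_1$ is a direct summand of $\varphi_*\varphi^!(C_1^{++})$. Moreover, $\varphi^!$ and $\varphi_*$ are additive, and $C_1$ is a summand of $C_1^{++}$. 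It follows that $\varphi_*\varphi^!C_1$ is a summand of $\varphi_*\varphi^!C_1^{++}$.

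The remaining issue, which we expect to be the main obstacle, is to show that $\varphi_*\varphi^!(C_1^{++})\in{^\perp C_1[>0]}$, given only the corresponding statement for $C_1$. We plan to handle it as follows. The map $\varphi_*\varphi^!$ is isomorphic to $\hom(\varphi_*\BS_2,-)$ by Proposition \ref{prop:EW}. In addition, $C_1^{++}\in\Prod(C_1)$ should hold because of definability. More precisely, $C_1^{++}$ is pure injective and lies in the definable closure of $C_1$. Since ${^\perp C_1[>0]}$ is closed under products and summands, and $\varphi_*\varphi^!$ preserves products, it suffices to know $\varphi_*\varphi^!C_1\in{^\perp C_1[>0]}$ and $C_1^{++}\in\Prod(C_1)$.

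If the claim $C_1^{++}\in\Prod(C_1)$ is too strong, we will instead avoid the double dual. The alternative is to argue directly with a pure monomorphism $\varphi_*\varphi^!C_1\to$ (something) splitting off $C_1$. We will then use the closure of ${^\perp C_1[>0]}$ under summands, which follows from C2 and definability, to conclude $C_1\in{^\perp C_1[>0]}$, that is, C1. Combined with C2 and C3, Proposition \ref{char-cosilt} then shows that $C_1$ is cosilting.
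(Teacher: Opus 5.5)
There is a genuine gap in your argument for C1, and your justification of C2 is incorrect.

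\textbf{C1.} Up to the point where $C_1$ is a direct summand of $\varphi_*\varphi^!(C_1^{++})$ you follow the paper, with $D=C_1^+$. (A small correction: if $C_1^+\to\varphi_*\varphi^*C_1^+\to E\stackrel{h}\to C_1^+[1]$ is the pure triangle, the dual triangle splits because $h^+$ is a phantom map into the pure injective object $E^+$. The object $C_1^{+++}$ plays no role.)

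The gap is the passage from $C_1^{++}$ back to $C_1$, and the missing idea is naturality. The split epimorphism $(\varphi_*\varphi^*C_1^+)^+\to C_1^{++}$ is the dual of the unit. Under the isomorphism of Proposition \ref{p:iso+}(1) it is the component $\varepsilon_{C_1^{++}}$ of the counit $\varepsilon$ of $\varphi_*\dashv\varphi^!$; this is what ``naturally'' means in the paper's proof. If a component of a natural transformation is a split epimorphism, so is its component at any retract. Concretely, let $s\colon C_1\to C_1^{++}$ and $r\colon C_1^{++}\to C_1$ satisfy $rs=1_{C_1}$, and let $\sigma$ be a section of $\varepsilon_{C_1^{++}}$. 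Then
\[\varepsilon_{C_1}\circ\varphi_*\varphi^!(r)\circ\sigma\circ s=r\circ\varepsilon_{C_1^{++}}\circ\sigma\circ s=rs=1_{C_1}.\]
Hence $C_1$ is a summand of $\varphi_*\varphi^!C_1=\varphi_*C_2$, which lies in ${^\perp C_1[>0]}$ by Lemma \ref{cond-C}(a). So C1 follows at once, without using C2.

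The bridge you propose instead does not work, for three reasons.
\begin{enumerate}
\item Knowing that $C_1$ and $\varphi_*\varphi^!C_1$ are both summands of $\varphi_*\varphi^!(C_1^{++})$ gives no relation between $C_1$ and $\varphi_*\varphi^!C_1$.
\item $C_1^{++}\in\Prod(C_1)$ does not follow from definability. For $C=\prod_p\Z/p$ in $\Der{\Z}$, $C^{++}$ is pure injective and lies in the definable closure of $C$. Yet $\Hom_\Z(\Q,C^{++})\neq0$, while $\Q$ has no nonzero maps to objects of $\Prod(C)$.
\item That route needs ${^\perp C_1[>0]}$ to be closed under products, i.e.\ it needs C2.
\end{enumerate}

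\textbf{C2.} Your argument is wrong as stated. For pure injective $C$, the class ${^\perp C[>0]}$ is closed under coproducts, pure subobjects and pure quotients. It is not in general closed under products, so it need not be definable.

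For instance, $C=\Q\oplus(\Q/\Z)[1]\in\Der{\Z}$ is pure injective and satisfies C1 and C3. Moreover $(\Z/p)[1]\in{^\perp C[>0]}$ for every prime $p$. Yet $(\prod_p\Z/p)[1]\notin{^\perp C[>0]}$, since $\Hom_\Z(\prod_p\Z/p,\Q)\neq0$.

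The paper's proof also disposes of C2 in one line by invoking pure injectivity, so you are not departing from it here. However, the example shows that C2 cannot follow from pure injectivity alone, even together with C1 and C3; it must use that $C_2$ is cosilting. One way to use it: for $Y_i\in{^\perp C_1[>0]}$, Lemma \ref{l:pure-tri} makes $\prod_iY_i$ a pure subobject of $\prod_i\varphi_*\varphi^*Y_i\cong\varphi_*\prod_i\varphi^*Y_i$. Then C2 for $C_2$ and Lemma \ref{l:phi^-1} give C2 for $C_1$, as soon as one knows that $\varphi^*$ maps ${^\perp C_1[>0]}$ into ${^\perp C_2[>0]}$.
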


\begin{proof}
    Since $C_1$ is pure injective, Lemma \ref{l:pi+} implies that $C_1$ is a direct summand in an object of the form $D^+$, for some $D\in\D_1$. By Lemma \ref{l:pure-tri}, the triangle \[D\to\varphi_*\varphi^*D\to E\to D[1]\] is pure in $\D_1$. As we have already noted, the triple $(\D_1,\D_1,(-)^+)$ is a duality triple, therefore the triangle
    \[D^+[-1]\to E^+\to(\varphi_*\varphi^*D)^+\to D^+\] splits, according to \cite[Theorem 3.6]{BW24}. This shows that $D^+$ is a direct summand in $(\varphi_*\varphi^*D)^+$.  We learned in Proposition \ref{p:iso+} that $(\varphi_*\varphi^*D)^+\cong\varphi_*\varphi^!(D^+)$, therefore, we have shown that $D^+$ is a direct summand in $\varphi_*\varphi^!(D^+)$, naturally. More precisely, the counit $\varphi_*\varphi^!(D^+)\to D^+$ of the adjunction $\varphi_*\dashv\varphi^!$ is a split epimorphism. In conclusion, the same property holds for its direct summand, hence $C_1$ is a direct summand in $\varphi_*\varphi^!C_1$.
    Next, we want to verify all conditions C1, C2 and C3 from Proposition \ref{char-cosilt}. We know by Lemma \ref{cond-C}, that $\varphi_*\varphi^!C_1=\varphi_*C_2\in{^\perp C_1[>0]}$, thus $C_1\in{^\perp C_1[>0]}$, proving C1. Condition C2 follows from the pure injectivity of $C_1$ and condition $C3$ is already assumed in the hypothesis.
\end{proof}

For comparing Theorem \ref{t:cosilt-desc} with the corresponding results in \cite{BHM25}, we again use the notations of Example \ref{e:derived}. In \cite[Theorem 5.12]{BHM25} it is shown that if $C_1=T^+$, for a bounded complex of projectives $T_1\in\Der A$, such that $C_2$ is cosilting, then $C_1$ is also cosilting.  If $C_1=T_1^+$, then $C_1$ is automatically pure injective, by Lemma \ref{l:pi+}. Therefore, even when we restrict ourselves to the case of the derived categories, Theorem \ref{t:cosilt-desc} is stronger than the respective descending result in \cite{BHM25}.

Let $(\D,\otimes,\BS)$ be a rigidly-compactly generated triangulated category, and suppose $\BS$ is silting. Denote $(\D^{\leq0}=\BS[<0]^\perp,\D^{>0}=\BS[\geq0]^\perp)$ the t-structure determined by $\BS$. A usual, the shifted t-structure will be denoted by $(\D^{\leq n},\D^{>n})=(\D^{\leq0}[-n],\D^{>0}[-n])$, for all $n\in\Z$; sometimes is convenient to write $\D^{\geq n+1}=\D^{>n}$. We say that $T\in\D$ is {\em $\BS$-bounded}, if there are integers $a\leq b$, such that
$T\in\Add(\BS)[a]*\ldots*\Add(\BS)[b].$ By shifting conveniently  we obtain
\[T[-a]\in\Add(\BS)*\Add(\BS)[1]*\ldots*\Add(\BS)[b-a]\in\D^{>-l}\cap\D^{\leq0},\] where $l=b-a+1$ is the length of $T$.
Replacing $T$ with its shift as before, we always can consider that an $\BS$-bounded complex appears on the last form.
Notice that $T\in\Add(\BS)*\ldots*\Add(\BS)[l-1]$, for some $l>0$ means that there is a chain of triangles
\[P^{-i}[i]\to T_i\to T_{i+1}\to P^{-i}[i+1],\ (0\leq i<l),\]
with $P^{-i}\in\Add(\BS)$, for all $0\leq i<l$, $T_0=T$, and $T_{l}=0$.  Applying the duality functor $(-)^+$ to this chain of triangles, we obtain
$T^+\in\Prod(\BS^+)[1-l]*\ldots*\Prod(\BS^+)$. We say that $T^+$ is $\BS^+$-cobounded.

\begin{prop}\label{l:s&s+}
Let $(\D,\otimes,\BS)$ be a rigidly-compactly generated triangulated category. Then $\BS^\perp={^\perp\BS^+}$. In particular, $\BS$ is silting if and only if $\BS^+$ is cosilting, and in this case the respective silting and cosilting t-structures coincide, up to a shift with $\pm1$; more precisely $({^\perp\BS^+}[\leq0],{^\perp\BS^+}[>0])=(\D^{\leq-1},\D^{>-1})$.
\end{prop}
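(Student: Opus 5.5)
The plan is to prove $\BS^\perp={^\perp\BS^+}$ directly from the defining isomorphism of $(-)^+$, and then obtain the silting/cosilting equivalence by comparing the conditions of Proposition \ref{char-silt} and Proposition \ref{char-cosilt}.

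\textbf{Step 1: the orthogonality equality.} For $X\in\D$ we have
\[\D(X,\BS^+)\cong\Hom_\Z(\D(\BS,\BS\otimes X),\Q/\Z)\cong\Hom_\Z(\D(\BS,X),\Q/\Z).\]
Since $\Q/\Z$ is an injective cogenerator of abelian groups, $\D(X,\BS^+)=0$ if and only if $\D(\BS,X)=0$. This gives $\BS^\perp={^\perp\BS^+}$. Replacing $X$ by its shifts yields, for every $N\subseteq\Z$,
\[\BS[N]^\perp={^\perp\BS^+[-N]}.\]
In particular,
\[\BS[<0]^\perp={^\perp\BS^+[>0]},\qquad \BS[\geq0]^\perp={^\perp\BS^+[\leq0]}.\]

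\textbf{Step 2: silting if and only if cosilting.} With the classes from Step 1, compare the conditions one by one.

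\emph{S3 versus C3.} These are equivalent because $\BS[\Z]^\perp={^\perp\BS^+[\Z]}$.

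\emph{S1 versus C1.} S1 says $\BS\in\BS[<0]^\perp$, while C1 says $\BS^+\in{^\perp\BS^+[>0]}$, that is, $\BS^+\in\BS[<0]^\perp$. These are not literally the same statement, so the cleanest route avoids comparing them directly. Note that $\BS$ is compact, so $\BS[<0]^\perp$ is closed under both coproducts and products. Hence S2 holds automatically, and C2 also holds, since ${^\perp\BS^+[>0]}=\BS[<0]^\perp$ is closed under products.

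Now suppose $\BS$ is silting. Then $(\BS[<0]^\perp,\BS[\geq0]^\perp)$ is a t-structure. By Step 1 this pair equals $({^\perp\BS^+[>0]},{^\perp\BS^+[\leq0]})$, and its shift is precisely the pair of classes determining the cosilting t-structure of $\BS^+$. I expect that showing $\BS^+$ cosilting reduces to C1, namely $\BS^+\in\BS[<0]^\perp=\D^{\leq0}$. For this, compute
\[\D(\BS[n],\BS^+)\cong\Hom_\Z(\D(\BS[n],\BS),\Q/\Z)=\Hom_\Z(\D(\BS,\BS[-n]),\Q/\Z),\]
which vanishes for $n<0$ by S1 for $\BS$, since $\D(\BS,\BS[m])=0$ for $m>0$. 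So C1 holds.

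The converse is symmetric. C1 gives $\D(\BS[n],\BS^+)=0$ for $n<0$, and the same duality computation gives $\D(\BS,\BS[-n])=0$ for $n<0$, which is S1.

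\textbf{Step 3: identifying the t-structures.} Once both objects are (co)silting, the cosilting t-structure of $\BS^+$ is
\[({^\perp\BS^+[\leq0]},{^\perp\BS^+[>0]})=(\BS[\geq0]^\perp,\BS[<0]^\perp),\]
and this must be matched with the displayed pair $(\D^{\leq-1},\D^{>-1})$.

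\textbf{Main obstacle.} This bookkeeping is the step I expect to be delicate. Aisles and coaisles swap roles under the orthogonality relations, and one has to pin down which shift by $\pm1$ makes the indices agree. I would check it by testing on $\BS$ and on $\BS[1]$ themselves, deciding membership via the vanishing computed in Step 2, and then fix the sign convention so that the coaisle of the cosilting t-structure becomes $\D^{>-1}$.
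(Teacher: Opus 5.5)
\textbf{There is a genuine gap: a sign error in Step 1 makes your identities false and leaves the last claim of the proposition unproved.}

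The error is precisely what makes your Step 3 look ``delicate''. For $X\in\D$ and $n\in\Z$ we have $\D(\BS[n],X)\cong\D(\BS,X[-n])$. By your own formula, this vanishes iff $\D(X[-n],\BS^+)\cong\D(X,\BS^+[n])$ vanishes. Hence $\BS[N]^\perp={^\perp(\BS^+[N])}$ with the same $N$, not $-N$.

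Your identities $\BS[<0]^\perp={^\perp\BS^+[>0]}$ and $\BS[\geq0]^\perp={^\perp\BS^+[\leq0]}$ are therefore false. In $\Der{\Z}$, where $\BS^+=\Q/\Z$, the first would say that the complexes with $H^i=0$ for $i>0$ are exactly those with $H^i=0$ for $i<0$.

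The same slip occurs in Step 2. Taking $X=\BS[n]$ in your Step 1 formula gives $\D(\BS[n],\BS^+)\cong\Hom_\Z(\D(\BS,\BS[n]),\Q/\Z)$, not $\Hom_\Z(\D(\BS[n],\BS),\Q/\Z)$. Also, C1 means $\BS^+\in\BS[>0]^\perp$, not $\BS^+\in\BS[<0]^\perp$. The two slips cancel, so ``S1 iff C1'' survives. However, your intermediate claim $\BS^+\in\D^{\leq0}$ is false in general: for the sphere spectrum, $\D(\BS[-1],\BS^+)\cong\Hom_\Z(\D(\BS[1],\BS),\Q/\Z)\cong\Z/2$.

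The more serious consequence is in Step 3. With your identities it yields $(\BS[\geq0]^\perp,\BS[<0]^\perp)=(\D^{>0},\D^{\leq0})$, which swaps aisle and coaisle. No choice of sign convention, and no testing on $\BS$ and $\BS[1]$, can turn this into $(\D^{\leq-1},\D^{>-1})$.

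Once the sign is fixed, the bookkeeping is immediate, and this is exactly the paper's argument:
\[{^\perp(\BS^+[\leq0])}=\BS[\leq0]^\perp=\D^{\leq0}[1]=\D^{\leq-1},\qquad {^\perp(\BS^+[>0])}=\BS[>0]^\perp=\D^{>0}[1]=\D^{>-1}.\]
The cosilting pair of $\BS^+$ is then literally the shift by $[1]$ of the silting pair of $\BS$. So one is a t-structure iff the other is, which gives ``silting iff cosilting'' at once. Your comparison of S1--S3 with C1--C3 is valid once the signs are corrected (with compactness of $\BS$ giving S2 and C2), but it is unnecessary.
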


\begin{proof} For $X\in\D$ we have \[\D(\BS,X)=0\Leftrightarrow\D(\BS,\BS\otimes X)=0\Leftrightarrow\Hom_\Z(\D(\BS,\BS\otimes X),\Q/\Z)=0\Leftrightarrow\D(X,\BS^+)=0.\]
Further, for an object $X\in\D$ we reformulate the relation $X\in\D^{\leq-1}$ in turn as:  $X[-1+n]\in\BS^\perp$, for all $n>0$, then $X[-1+n]\in{^\perp\BS^+}$, for all $n>0$, and finally $X\in{^\perp\BS^+}[\leq0]$, and similar for the coaisle.
\end{proof}

Recall that a class $\U$ in a compactly generated triangulated category  $\D$ is called {\em definable} if there is a set $\{f_i:C_i\to D_i\mid i\in I\}$ of maps in category $\D^c$ (of compact objects) such that \[\U=\{X\in\D\mid \D(X,f_i)\hbox{ is surjective for all }i\in I\}.\]  In the next Lemma we will need the fact that, if $\U$ is definable then for $X\in\D$ we have $X\in\U$ \iff $X^{++}\in\U$, see \cite[Theorem 6.16]{BW24}.  

\begin{lemma}\label{l:T+}
Let $(\D,\otimes,\BS)$ be a rigidly-compactly generated triangulated category, such that $\BS$ is a silting object.
If $T\in\D$ is a silting, $\BS$-bounded object of finite type, and $C=T^+$, then $C$ is a pure injective $\BS^+$-cobounded cosilting object.
\end{lemma}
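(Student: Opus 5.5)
The plan is to obtain the four properties of $C=T^+$ separately. Three of them are nearly formal consequences of the duality $(-)^+$. Semi-corigidity is the only step that really uses the hypotheses on $T$.

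\emph{Pure injectivity and coboundedness.} Pure injectivity of $C$ follows at once from Lemma \ref{l:pi+} (iii), because $C=T^+$ is of the form $D^+$. Coboundedness was already observed in the paragraph preceding Proposition \ref{l:s&s+}. After shifting we may write $T\in\Add(\BS)*\ldots*\Add(\BS)[l-1]$. Applying the contravariant triangle functor $(-)^+$ to the chain of triangles, and using that $(-)^+$ turns coproducts into products, gives $C\in\Prod(\BS^+)[1-l]*\ldots*\Prod(\BS^+)$.

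\emph{Cosilting via Proposition \ref{char-cosilt}.} I would check C1--C3 using the self-adjointness from Lemma \ref{l:x+} (2), which gives, for every $X\in\D$ and $n\in\Z$,
\[\D(X,C[n])=\D(X,(T[-n])^+)\cong\D(T[-n],X^+).\]
Hence $X\in{^\perp C[N]}$ if and only if $X^+\in T[-N]^\perp$.

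For C3: if $X\in{^\perp C[\Z]}$, then $X^+\in T[\Z]^\perp=0$ because $T$ is silting, so $X^{++}=0$. The canonical map $i_X:X\to X^{++}=0$ is a pure monomorphism, so $\yon(X)=0$. Compact generation then forces $X=0$.

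For C2: ${^\perp C[>0]}$ is closed under products because $C$ is pure injective. This is the same fact the paper uses in the proof of Theorem \ref{t:cosilt-desc}. Alternatively, one could use the description ${^\perp C[>0]}=\{X\mid X^+\in T[<0]^\perp\}$ together with the fact that $(-)^+$ sends products to objects that are pure-related to coproducts; I would fall back on this only if the cited fact needs justification.

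\emph{C1, the main step.} Here we need $\D(C,C[n])\cong\D(T[-n],T^{++})=0$ for $n>0$, i.e.\ $T^{++}\in T[<0]^\perp$. Since $T$ is silting, S1 gives $T\in T[<0]^\perp$. Since $T$ is of finite type, $T[<0]^\perp=\CP^\perp$ for a set $\CP$ of compact objects. Such a class is definable: it is cut out by the conditions that maps from the compacts $P\in\CP$ vanish, which can be phrased via maps between compacts. Therefore the cited \cite[Theorem 6.16]{BW24} yields $T^{++}\in T[<0]^\perp$, which is C1.

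The expected obstacle is exactly this identification of $T[<0]^\perp$ as definable in the sense used in the paper. The paper's definition is written with $\D(X,f_i)$, so I would have to match variances carefully: a class of the form $\CP^\perp$ with $\CP\subseteq\D^c$ is the standard example of a definable class. Once that is settled, the rest of the argument is formal.
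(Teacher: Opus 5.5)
\textbf{The gap is condition C2.} Your treatment of pure injectivity, coboundedness, C1 and C3 is fine and agrees with the paper; your C1 is the paper's argument for $\Prod(C)$ with $I$ a singleton. However, it is not true that ${^\perp C[>0]}$ is closed under products whenever $C$ is pure injective. The one-line remark to that effect in the proof of Theorem \ref{t:cosilt-desc} is not a general fact you can cite.

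Here is a counterexample. In $\Der{\Z}$, where $\BS=\Z$ is compact, rigid and silting, one has $\D(X,I[n])\cong\Hom_\Z(H^{-n}X,I)$ for every injective group $I$. Take $C=\Q\oplus(\Q/\Z)[2]$.
\begin{itemize}
\item $C$ is pure injective, since both summands are shifts of injective groups.
\item C3 holds because $\Q/\Z$ is an injective cogenerator.
\item C1 holds because $\Hom_\Z(\Q/\Z,\Q)=0$.
\item For each prime $p$, the object $X_p=(\Z/p)[1]$ lies in ${^\perp C[>0]}$, because $\Hom_\Z(\Z/p,\Q)=0$.
\item Nevertheless $\D(\prod_pX_p,\Q[1])\cong\Hom_\Z(\prod_p\Z/p,\Q)\neq0$, since $\prod_p\Z/p$ is not torsion.
\end{itemize}
Pure injectivity only gives closure of ${^\perp C[>0]}$ under pure subobjects. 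Your fallback (``pure-related to coproducts'') does not amount to an argument.

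This is why the paper never verifies C2. It applies \cite[Theorem 2.8]{Br24}, which needs only three things:
\begin{itemize}
\item $\Prod(C)\subseteq{^\perp C[>0]}$, obtained by applying definability to $T^{(I)}$;
\item cogeneration;
\item the boundedness condition $C\in\D^{\geq0}\cap{^\perp(\D^{\geq0}[-l])}$, which comes from $\BS^+$-coboundedness and the identification of the t-structures of $\BS$ and $\BS^+$.
\end{itemize}
That third condition is where the $\BS$-boundedness of $T$ and the silting property of $\BS$ enter. Your proposal never uses them for the cosilting part, which is a symptom of the gap.

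\textbf{How to repair your route.} You can close the gap with tools you already have. Let $X_i\in{^\perp C[>0]}$, so that $X_i^+\in T[<0]^\perp$ by your observation.
\begin{itemize}
\item By S2, $\coprod_iX_i^+\in T[<0]^\perp$.
\item Definability then gives $(\prod_iX_i^{++})^+\cong(\coprod_iX_i^+)^{++}\in T[<0]^\perp$, that is, $\prod_iX_i^{++}\in{^\perp C[>0]}$.
\item Since $\yon$ preserves products, the map $\prod_i i_{X_i}\colon\prod_iX_i\to\prod_iX_i^{++}$ is a pure monomorphism.
\item Each $C[n]$ is pure injective, so ${^\perp C[>0]}$ is closed under pure subobjects, and hence $\prod_iX_i\in{^\perp C[>0]}$.
\end{itemize}

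With this step your approach becomes genuinely different from the paper's. It proves full product-closure of ${^\perp C[>0]}$. As a result it gives the cosilting property of $T^+$ for every silting $T$ of finite type in this setting, and $\BS$-boundedness is needed only for the cobounded part of the conclusion. The paper instead proves only the weaker inclusion $\Prod(C)\subseteq{^\perp C[>0]}$ and compensates with the boundedness criterion.
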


\begin{proof}
    In view of Lemma \ref{l:pi+} the pure injectivity of $C$ is immediate. Moreover because $T$ is $\BS$-bounded, the dual $C=T^+$ is $\BS^+$-cobounded as we noticed above.
   Observe that for $X\in\D$ and $N\subseteq\Z$ the chain of isomorphisms
   \[\D\left(X,C[N]\right)=\D\left(X,T^+[N]\right)\cong\D\left(X,T[-N]^+\right)\cong\D\left(T[-N],X^{+}\right)\]
   shows that $X\in{^\perp{C[N]}}$ if and only if $X^+\in T[-N]^\perp$.
   For showing that $C$ is cosilting, we want to apply \cite[Theorem 2.8]{Br24}. In order to do that, we have to verify three conditions:

    Because $T$ is of finite type, it follows that $T[<0]^\perp$ is definable.
    This implies that it is closed under taking double dual, in particular, since $T^{(I)}$ belongs to $T[<0]^\perp$, for any set $I$, we have $\left(T^{(I)}\right)^{++}\in T[<0]^\perp$, and the observation above tells us that $C^I=\left(T^{(I)}\right)^+\in{^\perp{C[>0]}}$. It follows $\Prod(C)\subseteq{^\perp{C[>0]}}$, that is the first condition.

    Further, if $X\in{^\perp{C[\Z]}}$, then $X^+\in T[\Z]^\perp=\{0\}$, so $X=0$, proving the second condition, namely ${^\perp{C[\Z]}}=\{0\}$.

    For the last condition, since $C$ is $\BS^+$-cobounded, we may assume as above that $C\in\Prod(\BS^+)[1-l]*\ldots*\Prod(\BS^+)$. Using Lemma \ref{l:s&s+}, we deduce $C\in\D^{<l}\cap\D^{\geq 0}$.
    Thus $C$ belongs to the coaisle $\D^{\geq0}$, and $C\in{^\perp\D^{\geq l}}={^\perp\D^{\geq0}[-l]}$, therefore we are done.

\end{proof}

Let $\D$ be a rigidly-compactly generated tt category. We say that the duality functor induces a bijection between between equivalent classes of bounded silting and cosilting objects, of the assignment $X\mapsto X^+$ induces a bjection between the class of $\BS$-bounded objects, up to equivalence, and the class of $\BS^+$-bounded cosilting objects, up to equivalence. Remark that by \cite[Theorem 3.3]{AH21} this is the case for $\D=\Der A$, for some commutative ring $A$.

\begin{thm} Assume both hypotheses of Setting \ref{s:tcats} (LR) and Setting \ref{s:ttcats}. In addition, suppose the categories $\D_1$ and $\D_2$ to be rigidly-compactly generated, $\BS_1$ and $\BS_2$ are silting objects, the adjunction $\varphi^*\dashv\varphi_*$ is quasi faithfully flat, and $\varphi_*(Y^+)\cong(\varphi_*Y)^+$, for all $Y\in\D_2$. Finally, suppose that the duality functor induces a bijection between equivalent classes of bounded silting and cosilting objects in both categories $D_1$ and $\D_2$. If $T_1$ is an $\BS_1$-bounded
object such that $T_2$ is a silting object of finite type, then there is an $\BS_1$-bounded silting object in $U_1\in\D_1$, such that $U_2=\varphi^*U_1$ is a silting object in $\D_2$ that is equibvalent to $T_2$.
\end{thm}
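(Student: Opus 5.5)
The natural strategy is to pass to cosilting objects by duality, use the cosilting descent result (Theorem \ref{t:cosilt-desc}), and then return to silting objects using the assumed bijection. First we record the properties of $T_2$. Since $T_1$ is $\BS_1$-bounded and $\varphi^*$ is triangulated, preserves coproducts (being a left adjoint) and satisfies $\varphi^*\BS_1\cong\BS_2$, we get $\varphi^*\Add(\BS_1)\subseteq\Add(\BS_2)$. Hence $T_2=\varphi^*T_1$ is $\BS_2$-bounded, with the same bounds $a\le b$. Because $T_2$ is also assumed silting of finite type, Lemma \ref{l:T+} shows that $C_2':=T_2^+$ is a pure injective, $\BS_2^+$-cobounded cosilting object of $\D_2$.

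Next we move to $\D_1$. Put $C_1=T_1^+$. By Lemma \ref{l:pi+} it is pure injective, and by the remark preceding Proposition \ref{l:s&s+} it is $\BS_1^+$-cobounded. The hypothesis $\varphi_*(Y^+)\cong(\varphi_*Y)^+$ allows us to apply Proposition \ref{p:iso+}(2), which gives
\[\varphi^!C_1=\varphi^!(T_1^+)\cong(\varphi^*T_1)^+=T_2^+=C_2'.\]
So $\varphi^!C_1$ is cosilting in $\D_2$. To invoke Theorem \ref{t:cosilt-desc} it remains to check that $C_1$ cogenerates $\D_1$. Suppose $X\in{^\perp C_1[\Z]}$. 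By the computation in the proof of Lemma \ref{l:T+}, this gives $X^+\in T_1[\Z]^\perp$. We would then show that $T_1$ generates $\D_1$, as follows. Since $T_2$ is silting, Lemma \ref{cond-S}(d) shows that $\varphi_*$ reflects zero objects. If $Y\in T_1[\Z]^\perp$, we want to deduce $Y=0$; passing through the pure monomorphism $Y\to\varphi_*\varphi^*Y$ of Lemma \ref{l:pure-tri} is the natural route, though it may be easier to argue directly on $C_1$. Concretely, we would show $\D_1(X,C_1[\Z])=0$ forces $\D_2(\varphi^*X,C_2'[\Z])=0$, using $\varphi^*X\otimes$-compatibility and $C_2'\cong\varphi^!C_1$. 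Since $C_2'$ is cosilting, this yields $\varphi^*X=0$. Quasi faithful flatness then makes $1_X$ phantom, and since pure injective objects detect phantoms (or since $\yon(X)=0$ together with compact generation), we conclude $X=0$. Theorem \ref{t:cosilt-desc} then shows $C_1$ is cosilting.

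Finally we return to silting objects. The object $C_1$ is a $\BS_1^+$-cobounded cosilting object in $\D_1$, so the assumed bijection in $\D_1$ yields an $\BS_1$-bounded silting object $U_1$ with $U_1^+$ equivalent to $C_1$. By Theorem \ref{thm:silting-ascends} (whose hypotheses hold because $\D_1=\Loc(\BS_1)$ by compact generation and $\BS_1$ being silting), $U_2=\varphi^*U_1$ is silting and $\BS_2$-bounded. By Proposition \ref{p:iso+}(2) and Corollary \ref{c:a-silt-are-eq}(2), we have $U_2^+\cong\varphi^!(U_1^+)$, which is equivalent to $\varphi^!C_1\cong T_2^+$. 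Since the duality bijection in $\D_2$ is injective on equivalence classes, $U_2$ is equivalent to $T_2$.

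The main obstacle is verifying that $C_1$ cogenerates $\D_1$, equivalently that $T_1$ generates $\D_1$. I would first try to show that any $X$ with $\varphi^*X=0$ must vanish: quasi faithful flatness makes $1_X$ phantom, so $\yon(X)=0$, and compact generation then gives $X=0$.
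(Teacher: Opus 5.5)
Your proposal is correct and follows the paper's proof essentially step for step: $T_2$ is $\BS_2$-bounded, Lemma \ref{l:T+} applies to $T_2^+$, Proposition \ref{p:iso+}(2) gives $\varphi^!C_1\cong T_2^+$, then come Theorem \ref{t:cosilt-desc}, the bijection in $\D_1$, Theorem \ref{thm:silting-ascends}, and the bijection in $\D_2$; you are also more careful than the paper in two places, namely in treating $U_1^+$ only up to equivalence via Corollary \ref{c:a-silt-are-eq}(2) and in verifying that $C_1$ cogenerates $\D_1$, a hypothesis of Theorem \ref{t:cosilt-desc} that the paper never checks. Your vague ``$\otimes$-compatibility'' step should read $\D_2(\varphi^*X,C_2'[n])\cong\D_1(\varphi_*\varphi^*X,C_1[n])\cong\D_1(X\otimes\varphi_*\BS_2,C_1[n])$ (Proposition \ref{prop:EW}), which vanishes because ${^\perp C_1[\Z]}$ is localizing and hence a $\otimes$-ideal of $\D_1=\Loc(\BS_1)$ (the $Y$ with $X\otimes Y\in{^\perp C_1[\Z]}$ form a localizing class containing $\BS_1$), after which $\varphi^*X=0$ and quasi faithful flatness give $X=0$ exactly as you say, so the detour through generation by $T_1$ (and the unproved claim that it is equivalent to cogeneration by $C_1$) can be dropped.
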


\begin{proof}
   First note that if we apply $\varphi^*$ to the finite chain of triangles expressing the fact that $T_1$ is $\BS_1$-bounded, we deduce that $T_2$ is $\BS_2$-bounded. Let $C_1=T_1^+\in\D_1$ and $C_2=T_2^+\in\D_2$. From Lemma \ref{l:T+} we learned that $C_2$ is a pure injective $\BS_2^+$-cobounded cosilting object. Further, Proposition \ref{p:iso+} tells us that
   $\varphi^!C_1=\varphi^!(T_1^+)\cong(\varphi^*T_1)^+=T_2^+=C_2$, and Theorem \ref{t:cosilt-desc} implies that $C_1$ is cosilting. The bijection between bounded silting and cosilting objects in $\D_1$ gives us an $\BS_1$-bounded silting object $U_1\in\D_1$ such that $U_1^+=C_1$. We use Theorem \ref{thm:silting-ascends} to conclude that $U_2=\varphi^*U_1$ is silting in $D_2$; the same argument as for $T_2$, shows that $U_2$ is $\BS_2$-bounded. Finally $C_2=\varphi^!C_1=\varphi^!(U_1^+)\cong(\varphi^*U_1)^+=U_2^+$, and the bijection between bounded silting and cosilting objects in $\D_2$ tells us that $U_2$ and $T_2$ are equivalent.
\end{proof}

\end{document}